\newcommand{\BB}{\mathbb{B}}
\newcommand{\DD}{\mathbb{D}}
\newcommand{\CC}{\mathbb{C}}
\newcommand{\C}{\mathcal{C}}
\newcommand{\EE}{{\mathbb{E}}}
\newcommand{\GG}{{\mathbb{G}}}
\newcommand{\II}{{\mathbb{I}}}
\newcommand{\NN}{{\mathbb{N}}}
\newcommand{\RR}{{\mathbb{R}}}
\newcommand{\FF}{\mathbb{F}}
\newcommand{\Var}{\mathbb{V} {\rm ar}}
\newcommand{\n}{{\noindent}}
\def\d{{\rm d}}
\newtheorem{prop}{Proposition}
\newtheorem{lem}{Lemma}
\newtheorem{thm}{Theorem}
\newtheorem{remark}{Remark}
\newtheorem{assumption}{Assumption}
\definecolor{uared}{rgb}{0.85, 0.0, 0.3}
\newcommand{\blind}{0}
\xpatchcmd{\author}{\relax#1\relax}{\relax\detokenize{#1}\relax}{}{}
\begin{document}

\def\spacingset#1{\renewcommand{\baselinestretch}%
	{#1}\small\normalsize} \spacingset{1}

\if0\blind
{
	\title{\bf Testing equality between two-sample dependence structure using Bernstein polynomials}

	\author{Guanjie Lyu  \\ \vspace{-.3cm}
        Department of Mathematics and Statistics, University of Windsor, Canada\\ \vspace{.2cm}
		
		Mohamed Belalia\thanks{
			Corresponding author: \textit{Mohamed.Belalia @ uwindsor.ca}} \hspace{.2cm}\\  \vspace{.05cm}
		Department of Mathematics and Statistics, University of Windsor, Canada}
	
	\maketitle
} \fi

\if1\blind
{
	\bigskip
	\bigskip
	\bigskip
	\begin{center}
		{\LARGE\bf Testing equality between two-sample dependence structure using Bernstein polynomials}
	\end{center}
	\medskip
} \fi

\bigskip
\noindent\rule{\textwidth}{0.8pt}
\begin{abstract}
Tests for the equality of copulas between two samples, utilizing the empirical Bernstein copula process, are introduced and studied. Three statistics are proposed, and their asymptotic properties are established. Furthermore, the empirical Bernstein copula process is investigated in association with subsampling methods and the multiplier bootstrap. Through a simulation study, it is demonstrated that the Bernstein tests significantly outperform the tests based on the empirical copula.
\end{abstract}
\noindent%
{\it Keywords:} 	Empirical Bernstein copula ; Empirical process  ; Multiplier bootstrap  ; Subsampling method

\noindent\rule{\textwidth}{0.8pt}
\spacingset{1.45} 

\section{Introduction}
\label{sec:introduction}


\n Copulas play a crucial role in characterizing the complete dependence structure between random variables, making them indispensable in various fields such as statistics, finance, and actuarial science. They have gained widespread recognition and are considered as a fundamental tool. One noteworthy related topic is the testing of copula function equality between different groups, which has seen growing interest in recent years due to its practical implications, see~\cite{Remillard2009},~\cite{Bouzebda&ElFaouzi&Zari2011},~\cite{ Bouzebda&ElFaouzi2012}, and~\cite{Seo2021}. In particular, the identification of similarities in dependence structures holds significant relevance across diverse domains. For instance,~\cite{Dupuis2009empirical} have demonstrated that the dependence structures between credit default swaps and equity returns can differ significantly. Additionally,~\cite{Szolgay2016regional} conducted an exploration into the homogeneity of dependence structures pertaining to flood peaks and their corresponding volumes, both within different flood types and across various catchments.

However, the underlying copula function is often unknown, necessitating the adoption of an appropriate parametric model or, more commonly, a nonparametric approach. Among nonparametric estimators, the empirical copula, developed by~\cite{Ruschendorf1976}, stands as the most popular. A multitude of nonparametric tests, including the equality tests mentioned in the previous paragraph, have been built upon this approach in the literature. One major drawback of the empirical copula estimator is its discreteness, which violates the continuity of the copula function when the marginals are continuous. This limitation becomes more pronounced when the sample size is relatively small. To address this issue, several smooth estimators have been introduced. For instance,~\cite{Hall2006} and~\cite{Morettin2010} explored wavelet-smoothed empirical copula, while~\cite{Fermanian2004},~\cite{Chen2007} and~\cite{Omelka2009} investigated kernel-smoothed empirical copula. For a comprehensive overview of nonparametric estimation of copula and copula density, refer to~\cite{Charpentier2007}.

We employ the empirical Bernstein copula, a trending smooth estimator for the copula function, as our choice for conducting the equality test. This selection is motivated by two main factors. Firstly, estimation based on Bernstein polynomials is known to be asymptotically bias-free at boundary points, as compared to kernel-based methods that often suffer from excess bias near the boundaries (see,~\cite{Sancetta2004},~\cite{Jansen2012},~\cite{Leblanc2012b},~\cite{Belalia2016}, and~\cite{Belalia2017b}). A comprehensive discussion on boundary bias for kernel-based methods can be found in~\cite{Chen2007}. Secondly, the empirical Bernstein copula is a polynomial and therefore possesses all partial derivatives, which are crucial for constructing our two resampling methods. Furthermore, as noted by~\cite{Neumann2019}, ``in practice, it may not be most important which smoothing method to choose, while it is recommendable to smooth at all." The empirical Bernstein copula offers a favourable balance between simplicity and effectiveness when compared to other nonparametric estimators.

In the context of hypothesis testing relying on nonparametric estimators, standard Monte Carlo procedures often encounter challenges, as elaborated in~\hyperref[sec:multiplier]{Section~\ref{sec:multiplier}}. To ensure the generation of valid p-values, the widely accepted approach is the application of the multiplier bootstrap method, known for its effectiveness and promising performance. Expanding on this understanding, we introduce a multivariate Bernstein version of the multiplier bootstrap, building upon the foundational work of~\cite{Lyu2022}. However, it is worth noting that the Bernstein multiplier bootstrap, while advantageous in various aspects, can be computationally intensive and does not readily accommodate the popular empirical beta copula.

Recent studies by~\cite{ Beare2020} and~\cite{ Seo2021} have explored a randomization test strategy to obtain feasible p-values. Nevertheless, it is essential to recognize that this approach introduces a further computational burden compared to the multiplier bootstrap method. In response to these computational challenges, we propose an additional subsampling method for the empirical Bernstein copula process, with the potential to facilitate the generation of bootstraps for the empirical beta copula. Our simulation results indicate that these two procedures offer complementary advantages in addressing both computational and statistical aspects.

The remainder of the paper is organized as follows.~\hyperref[sec:2]{Section~\ref{sec:2}} presents the proposed test statistics and examines their asymptotic behaviours.~\hyperref[sec:multiplier]{Section~\ref{sec:multiplier}}  and~\hyperref[sec:subsam]{\ref{sec:subsam}} focus on the deployment of the multiplier bootstrap and subsampling method for the empirical Bernstein copula process, respectively. In~\hyperref[sec:simulation]{Section~\ref{sec:simulation}}, a simulation study is conducted, implementing the multiplier bootstrap and subsampling method. The proofs are provided in the \hyperref[app]{Appendix}.

\section{Testing Procedure}\label{sec:2}

\n In this section, a testing procedure is presented both for independent and paired samples. Heuristically, it can be shown that the two cases can be treated similarly for testing two-sample homogeneity of the dependence structure.

\subsection{Independent samples}\label{subsec:1}

\n Consider two independent samples of $\RR^d$-valued ($d$ is much less than the sample size) $i.i.d.$ vectors, that is, $(X_{11},\ldots, X_{1d}),\ldots,(X_{n_11}, \ldots, X_{n_1d})$ with distribution function $F$ associated with continuous margins $F_1,\ldots, F_d$ and \n $(Y_{11},\ldots, Y_{1d}),\ldots, (Y_{n_21}, \ldots, Y_{n_2d})$ with distribution function $G$ associated with continuous margins $G_1,\ldots, G_d$. Let $C, D$ denote the underlying copula function of the two samples, by~\cite{Sklar1959}, for any $\bm{u}=(u_1, \ldots, u_d)\in [0, 1]^d$,
\begin{equation*}
	C(\bm{u})=F(F_1^{-1}(u_1), \ldots, F_d^{-1}(u_d)),\quad D(\bm{u})=G(G_1^{-1}(u_1), \ldots, G_d^{-1}(u_d)),
\end{equation*}
where $F_{\ell}^{-1}(u_{\ell})=\inf\{t\in \RR: F_{\ell}(t)\ge u_{\ell}\}, \ell=1,\ldots, d$. To determine whether the dependence structure of the two samples is identical, the corresponding hypothesis test is
\begin{equation}{\label{eq: hypotheses}}
	\begin{cases}
		\mathscr{H}_0: C(\bm{u})=D(\bm{u}) & \forall \bm{u} \in [0,1]^d\\
		& \text{versus}\\
		\mathscr{H}_1:C(\bm{u})\ne D(\bm{u}) & \exists \bm{u} \in [0,1]^d .
	\end{cases}	 
\end{equation}	
Based on the two samples, the empirical copulas are defined as
\begin{equation*}
	C_{n_1}(\bm{u})=\frac{1}{n_1}\sum_{i=1}^{n_1}\prod_{\ell=1}^{d}\II\left(\widehat{U}_{i\ell}\le u_{\ell}\right),\quad 	D_{n_2}(\bm{u})=\frac{1}{n_2}\sum_{i=1}^{n_2}\prod_{\ell=1}^{d}\II\left(\widehat{V}_{i\ell}\le u_{\ell}\right),
\end{equation*}
where $\II(\cdot)$ denotes the indicator function and $\widehat{U}_{i\ell}, \widehat{V}_{i\ell} $ are the pseudo-observations with
\begin{equation*}
	\widehat{U}_{i\ell}=\frac{1}{n_1}\sum_{j=1}^{n_1}\II(X_{j\ell}\le X_{i\ell}), \quad 	\widehat{V}_{i\ell}=\frac{1}{n_2}\sum_{j=1}^{n_2}\II(Y_{j\ell}\le Y_{i\ell}).
\end{equation*}
One can smooth empirical copulas by Bernstein polynomials, that is, 
\begin{equation}\label{eq:2022-07-10, 11:01AM}
	\begin{aligned}
		C_{n_1, m_1}(\bm{u})&=\frac{1}{n_1}\sum_{i=1}^{n_1}\left[\sum_{k_1=0}^{m_1}\cdots\sum_{k_d=0}^{m_1}\prod_{\ell=1}^{d}\II\left(\widehat{U}_{i\ell}\le k_{\ell}/m_1\right)P_{k_{\ell}, m_1}(u_{\ell})\right],\\ 	
		D_{n_2, m_2}(\bm{u})&=\frac{1}{n_2}\sum_{i=1}^{n_2}\left[\sum_{k_1=0}^{m_2}\cdots\sum_{k_d=0}^{m_2}\prod_{\ell=1}^{d}\II\left(\widehat{V}_{i\ell}\le k_{\ell}/m_2\right)P_{k_{\ell}, m_2}(u_{\ell})\right],
	\end{aligned}
\end{equation}
where for simplicity, considering the same Bernstein order (which is assumed to be dependent on the sample size) for smoothing each dimension. Note that, when the Bernstein order equals the sample size, the empirical Bernstein copula reduces to the empirical beta copula, see~\cite{Segers2017} and~\cite{Kiriliouk2021}. Three statistics are proposed relying on Equation~\eqref{eq:2022-07-10, 11:01AM}. Let $\bm{m}=(m_1, m_2), \bm{n}=(n_1, n_2), \lambda_{\bm{n}}= n_1/(n_1+n_2)$. Define
\begin{align}\label{eq:2023-01-24}
	R_{\bm{n}}^{\bm{m}}&=n_2\lambda_{\bm{n}} \int_{[0, 1]^d}\Big\{C_{n_1,m_1}(\bm{u})-D_{n_2,m_2}(\bm{u})\Big\}^2\dif\bm{u},\notag\\
	S_{\bm{n}}^{\bm{m}}&=n_2\lambda_{\bm{n}} \int_{[0, 1]^d}\Big\{{C}_{n_1, m_1}(\bm{u})-{D}_{n_2, m_2}(\bm{u})\Big\}^2\dif{C}_{n_1, m_1}(\bm{u}),\\
	T_{\bm{n}}^{\bm{m}}&=\sqrt{n_2 \lambda_{\bm{n}} }\sup_{\bm{u}\in [0, 1]^d}|{C}_{n_1, m_1}(\bm{u})-{D}_{n_2, m_2}(\bm{u})|\notag.
\end{align}
For the establishment  of asymptotic behaviours of the proposed statistics, the following assumptions~\citep{Segers2012} are needed.
\begin{assumption}\label{ass:2022-07-05, 1:23PM}
	Assume every partial derivative $\dot{C}_{\ell}(\bm{u})=\partial C(\bm{u})/\partial u_{\ell}, \ell\in \{1,\ldots, d\}$ exists and is continuous on the set $V_{d,\ell}=\{\bm{u}\in [0, 1]^d: 0<u_{\ell}<1\}$.
\end{assumption}

\begin{assumption}\label{ass:2}
For every $j, \ell\in \{1, \ldots, d\}$, the second-order partial derivative $\ddot{C}_{j\ell}=\partial^2C(\bm{u})/\partial u_ju_{\ell}$ is defined and continuous, on the sets $V_{d, j}\cap V_{d, \ell}$, and there exists a constant $K>0$ such that
	\begin{equation*}
			\left|\ddot{C}_{j\ell}(\bm{u})\right|\le K\min\left(\frac{1}{u_j(1-u_j)}, \frac{1}{u_{\ell}(1-u_{\ell})}\right), \quad \bm{u}\in V_{d, j}\cap V_{d, \ell}.
	\end{equation*} 
\end{assumption}

Let $\CC_{n_1}=\sqrt{n_1}\{C_{n_1}(\bm{u})-C(\bm{u})\}, \DD_{n_2}=\sqrt{n_2}\{D_{n_2}(\bm{u})-D(\bm{u})\}$ be the empirical copula processes of the two samples. Under \hyperref[ass:2022-07-05, 1:23PM]{Assumption~\ref{ass:2022-07-05, 1:23PM}} and $m_1=cn_1^{\alpha}$ with $c>0, \alpha\ge 1$, the empirical Bernstein copula process~\citep{Segers2017} converges to a $d$-dimensional Brownian pillow, $i.e.$,
\begin{equation}\label{eq:2023-09-16, 11:13AM}
	\CC_{n_1,m_1}(\bm{u})=\sqrt{n_1}\{C_{n_1, m_1}(\bm{u})-C(\bm{u})\}\rightsquigarrow\CC(\bm{u})=\BB_C(\bm{u})-\sum_{\ell=1}^{d}\BB_C(\bm{u}^{\ell})\dot{C}_{\ell}(\bm{u}),
\end{equation}
where $\bm{u}^{\ell}=(1,\ldots, 1, u_{\ell}, 1,\ldots, 1)$, as all elements are replaced by $1$ except the $\ell$-th component and $\BB_C$ is a Brownian bridge with covariance function
\begin{equation*}
	\EE\left[\BB_C(\bm{u})\BB_C(\bm{v})\right]=C(\bm{u}\wedge\bm{v})-C(\bm{u})C(\bm{v}),\qquad \bm{u}, \bm{v}\in [0, 1]^d.
\end{equation*}
Similarly, under \hyperref[ass:2022-07-05, 1:23PM]{Assumption~\ref{ass:2022-07-05, 1:23PM}} and $m_2=cn_2^{\alpha}$ with $c>0, \alpha\ge 1$,
\begin{equation*}
	\DD_{n_2,m_2}(\bm{u})=\sqrt{n_2}\{D_{n_2, m_2}(\bm{u})-D(\bm{u})\}\rightsquigarrow\DD(\bm{u})=\BB_D(\bm{u})-\sum_{\ell=1}^{d}\BB_D(\bm{u}^{\ell})\dot{D}_{\ell}(\bm{u}).
\end{equation*}
Because of the weak convergence of the empirical Bernstein copula process, the following lemma holds.

\begin{lem}\label{lem:2022-06-22, 12:05PM}
	Suppose that $\lambda_{\bm{n}}\to \lambda$ and $m_r=cn_r^{\alpha}$ with $c>0, \alpha\ge 1$ for  $r=1, 2$.  If \hyperref[ass:2022-07-05, 1:23PM]{Assumption~\ref{ass:2022-07-05, 1:23PM}} is satisfied, then, as $\min(n_1, n_2)\to \infty$,
	\begin{equation}\label{eq:2022-07-08, 10:39PM}
		\FF^{\bm{m}}_{\bm{n}}=\sqrt{1-\lambda_{\bm{n}} }\CC_{n_1, m_1}-\sqrt{\lambda_{\bm{n}} }\DD_{n_2, m_2}\rightsquigarrow \FF=\sqrt{1-\lambda}\CC-\sqrt{\lambda}\DD .
	\end{equation}
\end{lem}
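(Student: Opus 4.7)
The plan is to deduce \eqref{eq:2022-07-08, 10:39PM} by combining three ingredients: the two marginal weak-convergence statements for $\CC_{n_1,m_1}$ and $\DD_{n_2,m_2}$ already recalled just before the lemma; the independence of the two samples; and Slutsky's theorem to replace the data-dependent weights $\sqrt{1-\lambda_{\bm{n}}}$ and $\sqrt{\lambda_{\bm{n}}}$ by their deterministic limits. I will work throughout in $\ell^\infty([0,1]^d)$, the natural space for empirical copula processes.

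First, I would upgrade the two marginal convergences to a joint weak convergence
\[
(\CC_{n_1,m_1},\DD_{n_2,m_2})\rightsquigarrow(\CC,\DD)\quad\text{in } \ell^\infty([0,1]^d)\times\ell^\infty([0,1]^d),
\]
where the limit pair consists of two \emph{independent} processes. Because the $X$-sample and the $Y$-sample are defined on (or can be realised on) independent probability spaces, the joint law of $(\CC_{n_1,m_1},\DD_{n_2,m_2})$ is the product of the two marginal laws. Marginal weak convergence together with independence then yields joint weak convergence to the product of the two limit laws, i.e.\ to $(\CC,\DD)$ with $\CC$ and $\DD$ independent; this is a standard fact for weak convergence on Polish or, more generally, Skorokhod spaces, and extends to the non-separable space $\ell^\infty([0,1]^d)$ in the Hoffmann--J\o rgensen sense because the limits are tight Borel measurable.

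Next, the map $\Phi_{a,b}:(f,g)\mapsto a f - b g$ from $\ell^\infty\times\ell^\infty$ to $\ell^\infty$ is continuous (in fact linear and bounded) for each fixed pair of scalars $(a,b)$. Applying the continuous mapping theorem with $(a,b)=(\sqrt{1-\lambda},\sqrt{\lambda})$ to the joint convergence above gives
\[
\sqrt{1-\lambda}\,\CC_{n_1,m_1}-\sqrt{\lambda}\,\DD_{n_2,m_2}\rightsquigarrow \sqrt{1-\lambda}\,\CC-\sqrt{\lambda}\,\DD=\FF.
\]
Finally, since $\lambda_{\bm{n}}\to\lambda$ deterministically, the difference
\[
\FF^{\bm{m}}_{\bm{n}}-\bigl(\sqrt{1-\lambda}\,\CC_{n_1,m_1}-\sqrt{\lambda}\,\DD_{n_2,m_2}\bigr)
=\bigl(\sqrt{1-\lambda_{\bm{n}}}-\sqrt{1-\lambda}\bigr)\CC_{n_1,m_1}-\bigl(\sqrt{\lambda_{\bm{n}}}-\sqrt{\lambda}\bigr)\DD_{n_2,m_2}
\]
vanishes in probability in the uniform norm, because the scalar factors tend to $0$ while $\|\CC_{n_1,m_1}\|_\infty$ and $\|\DD_{n_2,m_2}\|_\infty$ are $O_\PP(1)$ as a consequence of their weak convergence. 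A direct application of Slutsky's lemma then delivers the claimed limit.

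The only genuinely delicate point is the promotion of marginal to joint weak convergence in the non-separable space $\ell^\infty([0,1]^d)$; I would handle it by invoking the product-space result for weak convergence of independent processes (as in van der Vaart and Wellner, Theorem~1.4.8), which applies thanks to the tightness of the two marginal Gaussian limits $\CC$ and $\DD$ under Assumption~\ref{ass:2022-07-05, 1:23PM}. All remaining steps are routine continuous mapping and Slutsky arguments.
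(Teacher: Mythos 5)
Your argument is correct and follows essentially the same route as the paper: joint weak convergence of $(\CC_{n_1,m_1},\DD_{n_2,m_2})$ from independence of the two samples, followed by the continuous mapping theorem; the paper simply folds the deterministic weights $\sqrt{1-\lambda_{\bm{n}}},\sqrt{\lambda_{\bm{n}}}$ into the converging tuple and applies continuous mapping once, whereas you treat them separately via Slutsky, which is an equivalent packaging. Your explicit justification of the marginal-to-joint upgrade in $\ell^\infty([0,1]^d)$ (tight limits, product construction) is a welcome elaboration of what the paper dismisses with ``clearly.''
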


\begin{proof}
Since the two samples are independent, clearly,
\begin{equation*}
	\left(\sqrt{1-\lambda_{\bm{n}} }, \CC_{n_1, m_1}, \sqrt{\lambda_{\bm{n}} }, \DD_{n_2, m_2}\right)\rightsquigarrow\left( \sqrt{1-\lambda}, \CC, \sqrt{\lambda}, \DD\right).
\end{equation*}
Applying the continuous mapping theorem, this yields the desired result.
\end{proof}

With the help of \hyperref[lem:2022-06-22, 12:05PM]{Lemma~\ref{lem:2022-06-22, 12:05PM}}, weak convergence of the proposed statistics can be established under the same conditions. 

\begin{thm}\label{thm:2022-06-22, 12:31PM}
	Suppose that assumptions in \hyperref[lem:2022-06-22, 12:05PM]{Lemma~\ref{lem:2022-06-22, 12:05PM}} are satisfied. Then under the null hypothesis, as $\min(n_1, n_2)\to \infty$,
	\begin{align}
		R_{\bm{n}}^{\bm{m}}&=\int_{[0, 1]^d}\Big\{\FF^{\bm{m}}_{\bm{n}}(\bm{u})\Big\}^2\dif\bm{u}\rightsquigarrow R_{\, \FF}=\int_{[0, 1]^d}\{\FF(\bm{u})\}^2\dif\bm{u},\notag\\
		S_{\bm{n}}^{\bm{m}}&=\int_{[0, 1]^d}\Big\{\FF^{\bm{m}}_{\bm{n}}(\bm{u})\Big\}^2\dif{C}_{n_1, m_1}(\bm{u})\rightsquigarrow S_{\, \FF}= \int_{[0, 1]^d}\{\FF(\bm{u})\}^2\dif C_{}(\bm{u}),\notag\\
		T_{\bm{n}}^{\bm{m}}&=\sup_{\bm{u}\in [0, 1]^d}|\FF^{\bm{m}}_{\bm{n}}(\bm{u})|\rightsquigarrow T_{\, \FF}=\sup_{\bm{u}\in [0, 1]^d}|\FF_{}(\bm{u})|.
	\end{align}
\end{thm}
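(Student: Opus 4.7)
The plan is to recast each of $R_{\bm{n}}^{\bm{m}}$, $S_{\bm{n}}^{\bm{m}}$, and $T_{\bm{n}}^{\bm{m}}$ as a continuous functional (for the uniform metric on $\ell^{\infty}([0,1]^d)$) of $\FF_{\bm{n}}^{\bm{m}}$, and, for $S_{\bm{n}}^{\bm{m}}$, also of $C_{n_1,m_1}$. Then \hyperref[lem:2022-06-22, 12:05PM]{Lemma~\ref{lem:2022-06-22, 12:05PM}} together with the continuous mapping theorem does the rest. The crucial algebraic step, valid under $\mathscr{H}_0$ since there $C=D$, is to insert and subtract the common copula:
\[
\sqrt{n_2\lambda_{\bm{n}}}\bigl\{C_{n_1,m_1}(\bm{u})-D_{n_2,m_2}(\bm{u})\bigr\}=\sqrt{n_2\lambda_{\bm{n}}/n_1}\,\CC_{n_1,m_1}(\bm{u})-\sqrt{\lambda_{\bm{n}}}\,\DD_{n_2,m_2}(\bm{u})=\FF_{\bm{n}}^{\bm{m}}(\bm{u}),
\]
using the identity $n_2\lambda_{\bm{n}}/n_1=1-\lambda_{\bm{n}}$. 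Squaring and integrating, or taking the supremum, rewrites $R_{\bm{n}}^{\bm{m}}$, $S_{\bm{n}}^{\bm{m}}$, and $T_{\bm{n}}^{\bm{m}}$ exactly in the forms displayed in the theorem.

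For $R_{\bm{n}}^{\bm{m}}$ and $T_{\bm{n}}^{\bm{m}}$, the maps $\phi\mapsto\int_{[0,1]^d}\phi^2\,\dif\bm{u}$ and $\phi\mapsto\sup_{\bm{u}\in[0,1]^d}|\phi(\bm{u})|$ are continuous from $\ell^{\infty}([0,1]^d)$ to $\RR$ in the uniform norm. Under \hyperref[ass:2022-07-05, 1:23PM]{Assumption~\ref{ass:2022-07-05, 1:23PM}}, the limit process $\FF=\sqrt{1-\lambda}\,\CC-\sqrt{\lambda}\,\DD$ has almost surely continuous sample paths, inherited from the Brownian bridges $\BB_C,\BB_D$ and from continuity of the partial derivatives $\dot{C}_\ell,\dot{D}_\ell$. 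So \hyperref[lem:2022-06-22, 12:05PM]{Lemma~\ref{lem:2022-06-22, 12:05PM}} combined with the continuous mapping theorem yields the first and third assertions immediately.

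For $S_{\bm{n}}^{\bm{m}}$, the additional ingredient is uniform consistency of the empirical Bernstein copula, $\sup_{\bm{u}}|C_{n_1,m_1}(\bm{u})-C(\bm{u})|=O_P(n_1^{-1/2})$, which is a direct consequence of the asymptotic tightness of $\CC_{n_1,m_1}$. Combined with \hyperref[lem:2022-06-22, 12:05PM]{Lemma~\ref{lem:2022-06-22, 12:05PM}} via Slutsky's lemma, this upgrades the convergence to the joint statement $(\FF_{\bm{n}}^{\bm{m}},C_{n_1,m_1})\rightsquigarrow(\FF,C)$ in $\ell^{\infty}([0,1]^d)^2$. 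I would then decompose
\[
\int_{[0,1]^d}\bigl\{\FF_{\bm{n}}^{\bm{m}}\bigr\}^2\,\dif C_{n_1,m_1}=\int_{[0,1]^d}\bigl\{\FF_{\bm{n}}^{\bm{m}}\bigr\}^2\,\dif C+\int_{[0,1]^d}\bigl\{\FF_{\bm{n}}^{\bm{m}}\bigr\}^2\,\dif(C_{n_1,m_1}-C),
\]
so that the first term converges in distribution to $\int\{\FF\}^2\,\dif C$ by continuous mapping against the fixed continuous copula measure $C$, while the second is $o_P(1)$ since $\{\FF_{\bm{n}}^{\bm{m}}\}^2$ is uniformly asymptotically tight and $C_{n_1,m_1}-C$ vanishes uniformly in probability.

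The principal obstacle will be justifying this last step cleanly, because in dimension $d\ge 2$ the signed measure generated by the distribution function $C_{n_1,m_1}-C$ is delicate to handle termwise. A cleaner route, which I would adopt if the signed-measure bookkeeping becomes cumbersome, is a Helly--Bray / portmanteau argument: since $C_{n_1,m_1}\to C$ uniformly in probability with $C$ continuous, and $\FF$ has continuous sample paths, the bilinear functional $(\phi,\mu)\mapsto\int_{[0,1]^d}\phi^2\,\dif\mu$ is jointly continuous at $(\FF,C)$ in the relevant product topology, so the extended continuous mapping theorem delivers the second convergence and completes the proof.
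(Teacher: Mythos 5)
Your treatment of $R_{\bm{n}}^{\bm{m}}$ and $T_{\bm{n}}^{\bm{m}}$ (algebraic rewriting under $\mathscr{H}_0$ plus the continuous mapping theorem applied to Lemma~\ref{lem:2022-06-22, 12:05PM}) coincides with the paper's proof. For $S_{\bm{n}}^{\bm{m}}$, however, you take a genuinely different route: the paper writes $S_{\bm{n}}^{\bm{m}}=\phi\bigl(\{\FF^{\bm{m}}_{\bm{n}}\}^2,{C}_{n_1,m_1}\bigr)$ with $\phi(a,b)=\int_{(0,1]^d}a\,\dif b$, invokes Hadamard differentiability of $\phi$ tangentially to $C[0,1]^d\times D[0,1]^d$ (Harder's Lemma~A.1) and the functional delta method, the derivative at $(0,C)$ collapsing to $\int\{\FF\}^2\dif C$; you instead argue directly via joint convergence $(\FF^{\bm{m}}_{\bm{n}},C_{n_1,m_1})\rightsquigarrow(\FF,C)$ and an extended continuous-mapping / Helly--Bray argument exploiting that $C_{n_1,m_1}$ is the distribution function of a probability measure on $[0,1]^d$, that its uniform convergence to the continuous $C$ gives weak convergence of the induced measures, and that $\FF$ has continuous paths, so $\int\{\FF^{\bm{m}}_{\bm{n}}\}^2\dif C_{n_1,m_1}\to\int\{\FF\}^2\dif C$ along an almost-sure representation. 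Both are valid; your route is more elementary (no Hadamard-differentiability machinery), while the paper's delta-method route reuses an off-the-shelf lemma and avoids having to argue about weak convergence of the random measures at all. One caution: your first decomposition, bounding $\int\{\FF^{\bm{m}}_{\bm{n}}\}^2\dif(C_{n_1,m_1}-C)$ merely by tightness of $\{\FF^{\bm{m}}_{\bm{n}}\}^2$ and uniform smallness of $C_{n_1,m_1}-C$, does not work as stated, since the total variation of the signed measure $C_{n_1,m_1}-C$ need not vanish; the argument that actually closes this is precisely your fallback, which additionally uses asymptotic equicontinuity of $\FF^{\bm{m}}_{\bm{n}}$ (available because its weak limit has continuous trajectories), so that step should be made the main argument rather than an afterthought.
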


\begin{proof}
	The proof is postponed  to the \hyperref[app]{Appendix}.
\end{proof}

More generally, the asymptotic behaviour of the test statistics under the alternative hypothesis are given as follows.

\begin{thm}\label{thm:2022-06-23, 3:40AM}
	Suppose that assumptions in \hyperref[lem:2022-06-22, 12:05PM]{Lemma~\ref{lem:2022-06-22, 12:05PM}} are satisfied. Then, as $\min(n_1, n_2) \rightarrow \infty$,
	\begin{align*}
		n_2^{-1}R_{\bm{n}}^{\bm{m}}&\xrightarrow{a.s.} R(C, D)=\lambda\int_{[0, 1]^d}\Big\{C(\bm{u}
		)-D(\bm{u})\Big\}^2\dif\bm{u}\\
		n_2^{-1}S_{\bm{n}}^{\bm{m}}&\xrightarrow{a.s.} S(C, D)=\lambda\int_{[0, 1]^d}\Big\{C(\bm{u}
		)-D(\bm{u})\Big\}^2\dif C(\bm{u}),\\
		\sqrt{n_2^{-1}}T_{\bm{n}}^{\bm{m}}&\xrightarrow{a.s.} T(C, D)=\sqrt{\lambda}\sup_{\bm{u}\in [0, 1]^d}|C(\bm{u}
		)-D(\bm{u})|.
	\end{align*}
Specifically, under the alternative hypothesis, as $\min(n_1, n_2)\to \infty$, 
\begin{equation*}
	R_{\bm{n}}^{\bm{m}}\xrightarrow{a.s.}\infty,  \quad 	S_{\bm{n}}^{\bm{m}}\xrightarrow{a.s.}\infty ,\quad 	T_{\bm{n}}^{\bm{m}}\xrightarrow{a.s.}\infty, 
\end{equation*}
which provides a guarantee of the consistency of the proposed test statistics.

\end{thm}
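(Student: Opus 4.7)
My plan is to derive the three a.s.\ limits from a strong Glivenko--Cantelli type statement for the empirical Bernstein copula followed by the continuous mapping theorem, and then read off the divergence under $\mathscr{H}_1$ from positivity of the limiting functionals. The first step is to establish
\begin{equation*}
\sup_{\bm{u}\in[0,1]^d} |C_{n_1,m_1}(\bm{u}) - C(\bm{u})| \xrightarrow{a.s.} 0,
\end{equation*}
and analogously for $D_{n_2,m_2}$. I would split the difference as (i) $C_{n_1,m_1}$ minus the Bernstein polynomial $B_{m_1}C$ of the population copula, plus (ii) $B_{m_1}C - C$. Term (ii) vanishes uniformly on $[0,1]^d$ by classical multivariate Bernstein approximation of the continuous function $C$. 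For term (i), linearity of the Bernstein operator together with the fact that its weights form a probability distribution yields the deterministic bound $\|C_{n_1,m_1}-B_{m_1}C\|_\infty \le \|C_{n_1}-C\|_\infty$, which tends to zero almost surely by the Glivenko--Cantelli theorem for the empirical copula. The growth condition $m_r=cn_r^{\alpha}$ with $\alpha\ge 1$ guarantees $m_r\to\infty$, so both pieces vanish.

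Given uniform a.s.\ convergence of both smoothed empirical copulas together with $\lambda_{\bm{n}}\to\lambda$, the continuous mapping theorem yields the three stated limits. For $n_2^{-1}R_{\bm{n}}^{\bm{m}}=\lambda_{\bm{n}}\int_{[0,1]^d}(C_{n_1,m_1}-D_{n_2,m_2})^2\dif\bm{u}$, the integrand is bounded and converges uniformly on the bounded domain, so the integral converges to $\int(C-D)^2\dif\bm{u}$. For $\sqrt{n_2^{-1}}\,T_{\bm{n}}^{\bm{m}}=\sqrt{\lambda_{\bm{n}}}\,\|C_{n_1,m_1}-D_{n_2,m_2}\|_\infty$, continuity of the sup-norm under uniform convergence delivers $\sqrt{\lambda}\|C-D\|_\infty$. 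For $n_2^{-1}S_{\bm{n}}^{\bm{m}}$, I regard $C_{n_1,m_1}$ as a distribution function on $[0,1]^d$; uniform convergence $C_{n_1,m_1}\to C$ implies weak convergence of the associated Lebesgue--Stieltjes measures, and combining this with uniform convergence of $(C_{n_1,m_1}-D_{n_2,m_2})^2$ to the continuous, bounded limit $(C-D)^2$ yields the result via the standard split
\begin{equation*}
\int h_{\bm{n}}\dif G_{\bm{n}} - \int h\dif G = \int (h_{\bm{n}}-h)\dif G_{\bm{n}} + \left(\int h\dif G_{\bm{n}} - \int h\dif G\right).
\end{equation*}

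For the consistency claim, under $\mathscr{H}_1$ there exists $\bm{u}_0\in[0,1]^d$ with $C(\bm{u}_0)\ne D(\bm{u}_0)$; continuity of $C$ and $D$ then provides an open neighborhood on which $|C-D|>\varepsilon$, so $R(C,D)>0$ and $T(C,D)>0$, forcing $R_{\bm{n}}^{\bm{m}}\to\infty$ and $T_{\bm{n}}^{\bm{m}}\to\infty$ almost surely; similarly $S(C,D)>0$ whenever $C$ charges the set $\{C\ne D\}$, a mild regularity condition. The main technical obstacle I anticipate is the $S_{\bm{n}}^{\bm{m}}$ step, since one must pass from uniform convergence of a random distribution function to convergence of Stieltjes integrals against a random, uniformly converging integrand; this is handled cleanly by the displayed split combined with the Portmanteau theorem, and no additional regularity beyond the assumptions of Lemma~\ref{lem:2022-06-22, 12:05PM} is required.
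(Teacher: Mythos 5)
Your proposal is correct and follows essentially the same route as the paper: uniform almost-sure consistency of the empirical Bernstein copula obtained by passing through the Bernstein smoothing $B_{m}C$ of the true copula (the paper's auxiliary lemma, which additionally records the rate $O(\sqrt{\log\log n/n})+O(m^{-1/2})$), then the continuous mapping theorem for $R_{\bm{n}}^{\bm{m}}$ and $T_{\bm{n}}^{\bm{m}}$, and for $S_{\bm{n}}^{\bm{m}}$ the same split into an integrand-difference term and a measure-difference term, the latter handled by weak convergence of the measure induced by $C_{n_1,m_1}$ (you invoke Portmanteau directly where the paper uses asymptotic uniform integrability with van der Vaart--Wellner, an inessential difference since the integrand is bounded and continuous). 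Your explicit caveat that divergence of $S_{\bm{n}}^{\bm{m}}$ under $\mathscr{H}_1$ needs $\int_{[0,1]^d}\{C(\bm{u})-D(\bm{u})\}^2\,\mathrm{d}C(\bm{u})>0$ is a fair point that the paper leaves implicit.
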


\begin{proof}
	The proof is postponed  to the \hyperref[app]{Appendix}. 
\end{proof}

\begin{remark}
	Note that, as a consequence of the following \hyperref[lem:2023-09-16, 7:59AM]{Lemma~\ref{lem:2023-09-16, 7:59AM}}, the convergences in \hyperref[thm:2022-06-22, 12:31PM]{Theorem~\ref{thm:2022-06-22, 12:31PM}-\ref{thm:2022-06-23, 3:40AM}} also hold under  \hyperref[ass:2022-07-05, 1:23PM]{Assumptions~\ref{ass:2022-07-05, 1:23PM}-\ref{ass:2}} with a weaker condition on Bernstein order, that is, $m_r = cn_r^{\alpha}$ with $c > 0, \alpha > 3/4$ for $r=1, 2$. 
\end{remark}

\begin{lem}\label{lem:2023-09-16, 7:59AM}
Suppose that $C$ satisfies \hyperref[ass:2022-07-05, 1:23PM]{Assumptions~\ref{ass:2022-07-05, 1:23PM}-\ref{ass:2}}, additionally, if $m_r = cn_r^{\alpha}$ with $c > 0, \alpha > 3/4$ for $r=1, 2$, then, in $\ell^{\infty}([0, 1]^d)$, Equation~\eqref{eq:2023-09-16, 11:13AM} holds.
\end{lem}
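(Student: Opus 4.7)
The plan is to reduce the claim to the known weak convergence of the unsmoothed empirical copula process. Interchanging the order of summation in the definition of $C_{n_1, m_1}$ shows that $C_{n_1, m_1} = B_{m_1}(C_{n_1})$, where $B_m$ denotes the $d$-variate tensor-product Bernstein operator of degree $m$. Adding and subtracting $\CC_{n_1}$ gives
\[
\CC_{n_1,m_1}(\bm{u}) - \CC_{n_1}(\bm{u}) \;=\; \bigl[B_{m_1}(\CC_{n_1})(\bm{u}) - \CC_{n_1}(\bm{u})\bigr] + \sqrt{n_1}\bigl[B_{m_1}(C)(\bm{u}) - C(\bm{u})\bigr].
\]
Under Assumption~\ref{ass:2022-07-05, 1:23PM} alone, \citet{Segers2012} yields $\CC_{n_1}\rightsquigarrow\CC$ in $\ell^{\infty}([0,1]^d)$, so by Slutsky's lemma it suffices to prove that both bracketed terms vanish in sup-norm in probability.

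For the deterministic bias $\sqrt{n_1}\|B_{m_1}(C)-C\|_\infty$, I would use the probabilistic representation $B_m f(\bm{u})=\mathbb{E}[f(\bm{K}/m)]$, where $K_1,\ldots,K_d$ are independent with $K_\ell\sim\mathrm{Binomial}(m,u_\ell)$, together with a second-order Taylor expansion of $C$ around $\bm{u}$. Assumption~\ref{ass:2} is essential here: the bound $|\ddot{C}_{j\ell}(\bm{u})|\le K\min\bigl(1/[u_j(1-u_j)],\,1/[u_\ell(1-u_\ell)]\bigr)$ exactly offsets the $u_\ell(1-u_\ell)/m$ scaling of the Binomial variance uniformly up to the boundary, yielding $\|B_{m_1}(C)-C\|_\infty = O(m_1^{-1})$. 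Combined with $m_1 = cn_1^{\alpha}$, this contribution is $O(n_1^{1/2-\alpha})$, which is $o(1)$ whenever $\alpha > 1/2$, and a fortiori under the stated $\alpha > 3/4$.

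The main obstacle will be controlling the stochastic smoothing error $\|B_{m_1}(\CC_{n_1}) - \CC_{n_1}\|_\infty$. Conditioning on the data, I would write $B_{m_1}(\CC_{n_1})(\bm{u}) - \CC_{n_1}(\bm{u}) = \mathbb{E}_{\bm{K}}\bigl[\CC_{n_1}(\bm{K}/m_1) - \CC_{n_1}(\bm{u})\bigr]$ and exploit (i) the asymptotic equicontinuity of $\CC_{n_1}$, which is a consequence of its weak convergence to the continuous Gaussian limit $\CC$, and (ii) the Chernoff tail bound $\mathbb{P}\{|K_\ell/m_1 - u_\ell| > \delta\} \le 2\exp(-2m_1\delta^2)$. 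Splitting the inner expectation according to whether $|\bm{K}/m_1-\bm{u}|$ exceeds a radius of order $\sqrt{m_1^{-1}\log m_1}$, bounding the bulk by the modulus of continuity and the tail by a crude estimate on $|\CC_{n_1}|$, and taking the sup over $\bm{u}$ via chaining over a grid of $[0,1]^d$, yields a uniform bound. The condition $\alpha > 3/4$ then emerges from balancing the tail contribution (which carries a residual $\sqrt{n_1}$ factor inherited from the scaling of $\CC_{n_1}$) against the smoothing radius; the delicate step is coupling the equicontinuity estimate with a truncation away from the faces $\{u_\ell\in\{0,1\}\}$, where $\CC_{n_1}$ degenerates and Assumption~\ref{ass:2} is again needed to make the boundary behaviour integrable under the Bernstein weights.
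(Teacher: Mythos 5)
Your decomposition is the same as the paper's: writing $C_{n_1,m_1}=B_{m_1}(C_{n_1})$ and splitting $\CC_{n_1,m_1}$ into the Bernstein-smoothed empirical copula process plus the deterministic bias $\sqrt{n_1}\{B_{m_1}(C)-C\}$ is exactly the $T_1+T_2$ decomposition in the paper's proof. The difference is that the paper disposes of the two terms by citation --- \citet[Proposition 3.1]{Segers2017} for the stochastic smoothing error and \citet[Lemma 3.1]{Kojadinovic2022stute} for the bias --- whereas you propose to prove both directly. Your treatment of the stochastic term (asymptotic equicontinuity of $\CC_{n_1}$, which follows from $\CC_{n_1}\rightsquigarrow\CC$ under Assumption~\ref{ass:2022-07-05, 1:23PM} alone, combined with Chernoff concentration of the binomial smoothing weights) is precisely the argument behind the cited Proposition 3.1, so that part is sound.

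Two points need fixing. First, your attribution of the threshold $\alpha>3/4$ is wrong: the term $B_{m_1}(\CC_{n_1})-\CC_{n_1}$ carries no residual $\sqrt{n_1}$ factor ($\CC_{n_1}$ is already the normalized process and is $O_p(1)$ in sup-norm), so it is $o_p(1)$ as soon as $m_1\to\infty$; no balancing of tail against smoothing radius produces $3/4$, and Assumption~\ref{ass:2} plays no role in that term. In the paper the constraint $\alpha>3/4$ comes entirely from the bias, via the rate $\sup_{\bm{u}}\sqrt{n_1}\,|B_{m_1}(C)(\bm{u})-C(\bm{u})|=O\bigl(n_1^{(3-4\alpha)/6}\bigr)$ of the cited lemma (a uniform bias of order $m_1^{-2/3}$), and your own accounting (bias $O(m_1^{-1})$, hence $\alpha>1/2$) is inconsistent with where you claim $3/4$ arises. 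Second, the claim $\sup_{\bm{u}}|B_{m_1}(C)(\bm{u})-C(\bm{u})|=O(m_1^{-1})$ is the crux and is not obtained by a straight second-order Taylor expansion around $\bm{u}$: the remainder evaluates $\ddot{C}_{\ell\ell}$ at intermediate points that can be much closer to the boundary than $\bm{u}$ (indeed $K_\ell/m_1$ may equal $0$ or $1$, where the expansion is not even defined), so the bound in Assumption~\ref{ass:2} does not simply ``offset the variance'' pointwise. The estimate can be repaired --- telescope over coordinates, use $\EE[K_\ell/m_1]=u_\ell$ to kill the first-order term, apply the Taylor bound only on the event $\{u_\ell/2\le K_\ell/m_1\le(1+u_\ell)/2\}$, and control the complementary event via the Lipschitz property of $C$ and a multiplicative Chernoff bound, whose contribution of order $u_\ell e^{-m_1u_\ell/8}$ is $O(m_1^{-1})$ uniformly --- but as written this step is a gap; alternatively, invoke the $O(m_1^{-2/3})$ bias bound the paper cites, which suffices under the stated $\alpha>3/4$.
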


\begin{proof}
	The proof is postponed  to the \hyperref[app]{Appendix}. 
\end{proof}

Comparing with~\hyperref[thm:2022-06-22, 12:31PM]{Theorem 1}, imposing more conditions,  a strong approximation of the proposed statistics follows.

\begin{thm}\label{thm:2023-02-05, 8:49PM}
Suppose that $\lambda_{\bm{n}}\to \lambda$ and $m_r=cn_r^{\alpha}$ with $c>0, \alpha\ge 1$ for  $r=1, 2$ as $\min(n_1, n_2)\to \infty$. Additionally,  \hyperref[ass:2022-07-05, 1:23PM]{Assumptions~\ref{ass:2022-07-05, 1:23PM}-\ref{ass:2}} are satisfied. On a suitable probability space, it is possible to define $\FF_{\bm{n}}^{\bm{m}}(\bm{u}), \bm{u}\in [0, 1]^d$, jointly with Gaussian processes $\{\FF^*_{\bm{n}}(\bm{u})\}_{\bm{n}\in \NN^+\times\, \NN^+}$, in such a way that, under the null hypothesis, as $\min(n_1, n_2)\to \infty$, we have
	\begin{align*}
		\left|R_{\bm{n}}^{\bm{m}}-\int_{[0, 1]^d}\{\FF^*_{\bm{n}}(\bm{u})\}^2\dif\bm{u}\right|&\overset{a.s.}{=}O\left(\max\left((\log\log n_1)^{1/2}, (\log \log n_2)^{1/2}\right)\Psi(n_1, n_2)\right),\\
		\left|S_{\bm{n}}^{\bm{m}}-\int_{[0, 1]^d}\{\FF^*_{\bm{n}}(\bm{u})\}^2\dif C(\bm{u}) \right|&\overset{a.s.}{=}O\left(\max\left((\log\log n_1)^{1/2}, (\log \log n_2)^{1/2}\right)\Psi(n_1, n_2)\right),\\
		\left|T_{\bm{n}}^{\bm{m}}-\sup_{\bm{u}\in [0, 1]^d}|\FF^*_{\bm{n}}(\bm{u})|\right|&\overset{a.s.}{=}O\left(\Psi(n_1, n_2)\right),
	\end{align*}
	where 
		\begin{equation*}
		\Psi(n_1, n_2)=\max\left(n_1^{-1/(2(2d-1))}\log n_1, n_2^{-1/(2(2d-1))}\log n_2\right).
	\end{equation*}
and
	\begin{equation*}
		\FF^*_{\bm{n}}(\bm{u})=\sqrt{1-\lambda_{\bm{n}} }\CC^{(n_1)}+\sqrt{\lambda_{\bm{n}} }\DD^{(n_2)}
	\end{equation*}
where	$\{\CC^{(n_1)}\}_{n_1\in \NN^+} \,(\text{resp. } \{\DD^{(n_2)}\}_{n_2\in \NN^+})$ are independent copies of $\CC \,(\text{resp. }\DD)$. Additionally, $\{\CC^{(n_1)}\}_{n_1\in \NN^+}$ are also independent of  $\{\DD^{(n_2)}\}_{n_2\in \NN^+}$.
\end{thm}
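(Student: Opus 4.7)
The plan is to reduce the three statistics to a single strong-approximation statement for the joint Bernstein copula process $\FF^{\bm{m}}_{\bm{n}}$, and then unpack each case using only the algebraic identity $f^{2}-g^{2}=(f-g)(f+g)$, the triangle inequality, and the law of the iterated logarithm. The starting ingredient should be a Komlós--Major--Tusnády-type construction, on a suitable enlarged probability space, of independent Gaussian processes $\CC^{(n_1)}$ and $\DD^{(n_2)}$ distributed as $\CC$ and $\DD$ respectively, satisfying
\[
\sup_{\bm{u}\in [0,1]^d}\bigl|\CC_{n_1,m_1}(\bm{u})-\CC^{(n_1)}(\bm{u})\bigr|\overset{a.s.}{=}O\bigl(n_1^{-1/(2(2d-1))}\log n_1\bigr),
\]
with the analogous bound for $\DD_{n_2,m_2}$ and $-\DD^{(n_2)}$ (the sign is free under $\mathscr{H}_0$ because the centred Gaussian $\DD$ has the same law as $-\DD$). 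This is obtained by applying Massart's multivariate KMT approximation to each raw empirical process, combining it with the Hadamard differentiability of the copula functional (as in Segers, 2012) to transfer the rate to the empirical copula process, and finally absorbing the Bernstein smoothing error $\|\CC_{n_1,m_1}-\CC_{n_1}\|_\infty$ into the KMT rate --- here Assumptions~\ref{ass:2022-07-05, 1:23PM}--\ref{ass:2} together with $\alpha\ge 1$ guarantee that the smoothing error is of smaller order than $n_1^{-1/(2(2d-1))}\log n_1$.

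Granting this, define the joint approximating process $\FF^*_{\bm{n}}=\sqrt{1-\lambda_{\bm{n}}}\CC^{(n_1)}+\sqrt{\lambda_{\bm{n}}}\DD^{(n_2)}$ with $\CC^{(n_1)},\DD^{(n_2)}$ independent. By the triangle inequality the joint rate
\[
\Delta_{\bm{n}}:=\sup_{\bm{u}\in[0,1]^d}\bigl|\FF^{\bm{m}}_{\bm{n}}(\bm{u})-\FF^*_{\bm{n}}(\bm{u})\bigr|\overset{a.s.}{=}O\bigl(\Psi(n_1,n_2)\bigr)
\]
follows, which immediately yields the bound for $T_{\bm{n}}^{\bm{m}}$ via the reverse triangle inequality $\bigl|\|\FF^{\bm{m}}_{\bm{n}}\|_\infty-\|\FF^{*}_{\bm{n}}\|_\infty\bigr|\le\Delta_{\bm{n}}$.

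For the integrated statistic $R_{\bm{n}}^{\bm{m}}$ I would write
\[
\left|R_{\bm{n}}^{\bm{m}}-\int_{[0,1]^d}\{\FF^*_{\bm{n}}(\bm{u})\}^2\,\d\bm{u}\right|\le \Delta_{\bm{n}}\cdot\bigl(\|\FF^{\bm{m}}_{\bm{n}}\|_\infty+\|\FF^*_{\bm{n}}\|_\infty\bigr),
\]
and then invoke the law of the iterated logarithm applied to each empirical Bernstein copula process, giving $\|\FF^{\bm{m}}_{\bm{n}}\|_\infty+\|\FF^*_{\bm{n}}\|_\infty=O(\max((\log\log n_1)^{1/2},(\log\log n_2)^{1/2}))$ almost surely; this produces the announced rate.

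The $S_{\bm{n}}^{\bm{m}}$ statistic is the delicate case because of the random integrator $\d C_{n_1,m_1}$. I would split
\[
S_{\bm{n}}^{\bm{m}}-\int\{\FF^*_{\bm{n}}\}^2\,\d C=\int\Bigl(\{\FF^{\bm{m}}_{\bm{n}}\}^2-\{\FF^*_{\bm{n}}\}^2\Bigr)\,\d C+\int\{\FF^{\bm{m}}_{\bm{n}}\}^2\,\d(C_{n_1,m_1}-C),
\]
handle the first integral exactly as for $R_{\bm{n}}^{\bm{m}}$, and bound the second via a $d$-dimensional integration-by-parts argument of the Rémillard--Scaillet (2009) type, using $\|C_{n_1,m_1}-C\|_\infty=O(n_1^{-1/2}(\log\log n_1)^{1/2})$ a.s.\ from the LIL for $\CC_{n_1,m_1}$, together with control on the derivatives of $\{\FF^{\bm{m}}_{\bm{n}}\}^2$. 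I expect this last step to be the main obstacle: the integration by parts on the $d$-cube generates boundary terms that have to be bounded uniformly, which is where Assumption~\ref{ass:2} (the second-order derivative condition) enters the argument. Once those terms are controlled, combining with the $(\log\log n)^{1/2}$ LIL bound on $\|\FF^{\bm{m}}_{\bm{n}}\|_\infty$ delivers the $O((\log\log n)^{1/2}\Psi(n_1,n_2))$ rate and completes the proof.
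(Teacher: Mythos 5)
Your coupling construction and your treatment of $T_{\bm{n}}^{\bm{m}}$ and $R_{\bm{n}}^{\bm{m}}$ follow essentially the same route as the paper: the paper also obtains $\CC^{(n_1)},\DD^{(n_2)}$ from a multivariate strong approximation of the raw empirical process at rate $n^{-1/(2(2d-1))}\log n$ (it cites Borisov, 1982, rather than a Massart-type KMT bound), transfers the rate to the empirical copula process via Segers' (2012) almost-sure remainder bound, absorbs the Bernstein smoothing error $\sup_{\bm{u}}|\CC_{n_1,m_1}(\bm{u})-\CC_{n_1}(\bm{u})|$ under $\alpha\ge 1$, and then uses $f^2-g^2=(f-g)(f+g)$ together with the LIL bound $\sup_{\bm{u}}|\FF_{\bm{n}}^{\bm{m}}(\bm{u})|=O(\max((\log\log n_1)^{1/2},(\log\log n_2)^{1/2}))$ and the reverse triangle inequality. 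Your remark about the sign of $\DD^{(n_2)}$ under $\mathscr{H}_0$ is a detail the paper glosses over, and is welcome.

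The genuine gap is in your plan for $S_{\bm{n}}^{\bm{m}}$. Your decomposition leaves the term $\int\{\FF_{\bm{n}}^{\bm{m}}\}^2\,\dif(C_{n_1,m_1}-C)$, and the method you sketch for it (integration by parts, bounding via $\sup_{\bm{u}}|C_{n_1,m_1}(\bm{u})-C(\bm{u})|$ times control of the derivatives of $\{\FF_{\bm{n}}^{\bm{m}}\}^2$) cannot deliver the announced rate: $\FF_{\bm{n}}^{\bm{m}}$ carries the $\sqrt{n}$ scaling, so the Hardy--Krause variation of its square is of order $\sqrt{n_2\lambda_{\bm{n}}}\,(\log\log n)^{1/2}$, and multiplying by $\sup_{\bm{u}}|C_{n_1,m_1}(\bm{u})-C(\bm{u})|=O(n_1^{-1/2}(\log\log n_1)^{1/2})$ only yields $O(\log\log n_1)$, which does not even tend to zero, let alone match $O((\log\log n)^{1/2}\Psi(n_1,n_2))$; moreover Assumption~\ref{ass:2} is not what controls IBP boundary terms here --- in the paper it enters through the Bernstein bias bound and the consistency of the smoothed partial derivatives. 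The paper sidesteps the difficulty by choosing the other split: it first compares $S_{\bm{n}}^{\bm{m}}$ with $\int\{\FF^*_{\bm{n}}\}^2\,\dif C_{n_1,m_1}$, which is handled exactly like $R_{\bm{n}}^{\bm{m}}$ because $C_{n_1,m_1}$ is a probability measure on $[0,1]^d$, and then bounds $\int\{\FF^*_{\bm{n}}\}^2\,\dif(C_{n_1,m_1}-C)$ using only the almost-sure boundedness of the centred Gaussian process $\FF^*_{\bm{n}}$ and the LIL rate for $\sup_{\bm{u}}|C_{n_1,m_1}(\bm{u})-C(\bm{u})|$ from the appendix lemma, so no derivative or variation control of the empirical process is ever needed. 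Within your own split the fix is to first replace $\{\FF_{\bm{n}}^{\bm{m}}\}^2$ by $\{\FF^*_{\bm{n}}\}^2$ inside the integral against $\dif(C_{n_1,m_1}-C)$ --- the signed measure has total variation at most $2$, so this costs only $O((\log\log n)^{1/2}\Psi(n_1,n_2))$ --- and then proceed as the paper does; as written, your IBP step would fail.
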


\begin{proof}
	The proof is postponed  to the \hyperref[app]{Appendix}. 
\end{proof}

\begin{remark}
For the precise meaning of ``suitable probability space", we refer to~\citet[Remark 2.1]{Bouzebda&ElFaouzi&Zari2011}.
\end{remark}

\subsection{Paired samples}

\n Sometimes, one may encounter paired samples. This subsection addresses a testing procedure in this scenario. Consider independent vectors $\bm{Z}_1=(X_{11},\ldots, X_{d1}, Y_{11},\ldots, Y_{d1}), \ldots, \bm{Z}_n=(X_{1n},\ldots, X_{dn},Y_{1n},\ldots, Y_{dn})$ with copula function $\mathcal{C}$ on $[0, 1]^{2d}$ , satisfying the property that for any $\bm{u}, \bm{v}\in [0, 1]^d$,
\begin{equation*}
	\C(\bm{u}, \underbrace{1,\ldots, 1}_{d})=C(\bm{u}),\quad \C(\underbrace{1, \ldots, 1}_d, \bm{v})=D(\bm{v}),
\end{equation*}
where $C, D$ are copulas for $X$ and $Y$, respectively. The objective is to investigate if $C$ and $D$ are equal. Define the empirical Bernstein copula for sample $Z$ as follows. For $\bm{u}, \bm{v}\in [0, 1]^d$,
\begin{equation*}
	\C_{n, m}(\bm{u}, \bm{v})=\frac{1}{n}\sum_{i=1}^{n}\left[\sum_{k_1=0}^{m}\cdots\sum_{k_d=0}^m\sum_{\kappa_{1}=0}^{m}\cdots\sum_{\kappa_{d}=0}^{m}\prod_{\ell=1}^{d}\prod_{j=1}^{d} \II\left(\widehat{U}_{i\ell}\le k_{\ell}/m\right)\II\left(\widehat{V}_{ij}\le \kappa_{j}/m\right)P_{k_{\ell}, m}(u_{\ell})P_{\kappa_{j}, m}(v_{j})\right].
\end{equation*}
Further, under \hyperref[ass:2022-07-05, 1:23PM]{Assumption 1} and as $n/m \to c>0$, let $\bm{z}\in [0, 1]^{2d}$. As $n\to \infty$,
\begin{equation*}
	\mathfrak{C}_{n, m}(\bm{z})=\sqrt{n}\{\C_{n, m}(\bm{z})-\C(\bm{z})\}\rightsquigarrow\mathfrak{C}(\bm{z})=\BB_\C(\bm{z})-\sum_{\ell=1}^{2d}\BB_\C(\bm{z}^{\ell})\dot{\C}_{\ell}(\bm{z}).
\end{equation*}
It is readily  observed that
\begin{equation}\label{eq:2023-01-31, 8:58AM}
	\C_{n, m}(\bm{u},1,\ldots, 1)=C_{n, m}(\bm{u}),\quad \C_{n, m}(1, \ldots, 1, \bm{v})=D_{n, m}(\bm{v}),
\end{equation}
where $C_{n, m}(\bm{u}), D_{n, m}(\bm{v})$ are defined in Equation~\eqref{eq:2022-07-10, 11:01AM}, $i.e.$, the empirical Bernstein copulas for $X$ and $Y$, respectively. 
It immediately follows that, as $n\to \infty$,
\begin{equation*}
	\CC_{n, m}(\bm{u})=\sqrt{n}\{C_{n, m}(\bm{u})-C(\bm{u})\}\rightsquigarrow \CC(\bm{u}), \quad \DD_{n, m}(\bm{v})=\sqrt{n}\{D_{n, m}(\bm{u})-D(\bm{u})\}\rightsquigarrow \DD(\bm{v}).
\end{equation*}
Hence, the paired samples case can be treated the same as the independent samples case by separating $X$ and $Y$ as in Equation~\eqref{eq:2023-01-31, 8:58AM}, and all the procedures and results for independent samples are valid for paired samples as well. For this reason, the remainder of this article focuses on the independent samples case.

\section{Multiplier bootstrap}\label{sec:multiplier}

\n The conventional Monte Carlo procedure is unsuitable for this scenario due to its reliance on the underlying copula $C$ and $D$, thereby rendering it inadequate for accurately evaluating the limits of proposed test statistics. To address this limitation, a widely acknowledged approach in the literature is the multiplier bootstrap method, originally introduced by~\cite{Scaillet2005a} and further advanced by~\cite{Remillard2009}. Prominently, this method has garnered significant attention and adoption by numerous researchers~\citep{Kojadinovic2011, Genest2012, Harder2017, Bahraoui2018} across diverse research contexts. The multiplier bootstrap technique offers notable benefits, including the provision of  valid p-values and exceptional efficiency, particularly when applied to goodness-of-fit tests.

Recently,~\cite{Lyu2022} constructed multiplier bootstraps for the empirical Bernstein copula process dealing with bivariate copulas. Here, we will extend these results to the multivariate case. Let $H\in \NN^+$ and for each $h\in\{1,\ldots, H\}$, let $\bm{\xi}_{n_1+n_2}^{(h)}\coloneqq\left(\xi_1^{(h)},\ldots, \xi_{n_1+n_2}^{(h)}\right)$ be a vector of independent random variables with unit mean and unit variance (taking $\xi_i^{(h)}\sim \text{Exp}(1), i=1, \ldots, n_1+n_2$). Denote the sample mean of the first $n_1$ observations by $\overline{\xi}_{n_1}^{(h)}$, and the sample mean of the remaining $n_2$ observations by $\overline{\xi}_{n_2}^{(h)}$.  Set
\begin{align*}
	G_{n_1, m_1}(\bm{u})&=\frac{1}{{n_1}}\sum_{i=1}^{n_1}\left[\sum_{k_1=0}^{m_1}\cdots\sum_{k_d=0}^{m_1}\prod_{\ell=1}^{d}\II\left({U}_{i\ell}\le k_{\ell}/m_1\right)P_{k_{\ell}, m_1}(u_{\ell})\right]\\
	&=\sum_{k_1=0}^{m_1}\cdots\sum_{k_d=0}^{m_1}G_{n_1}\left(\frac{k_1}{m_1},\ldots, \frac{k_d}{m_1}\right)\prod_{\ell=1}^{d}P_{k_{\ell}, m_1}(u_{\ell}),
\end{align*}
where $U_{i\ell}=F_{\ell}(X_i), \ell=1, \ldots, d$. For $h\in \{1, \ldots, H\}$, define
\begin{equation*}
{\GG}_{n_1, m_1}(\bm{u})=\sqrt{n_1}\left\{G_{n_1, m_1}(\bm{u})-C(\bm{u})\right\}=\frac{1}{\sqrt{n_1}}\sum_{i=1}^{n_1}\left[\sum_{k_1=0}^{m_1}\cdots\sum_{k_d=0}^{m_1}\prod_{\ell=1}^{d}\II\left({U}_{i\ell}\le k_{\ell}/m_1\right)P_{k_{\ell}, m_1}(u_{\ell})-C(\bm{u})\right]	,
\end{equation*}
and
\begin{equation*}
	{\GG}^{(h)}_{n_1, m_1}(\bm{u})=\frac{1}{\sqrt{n_1}}\sum_{i=1}^{n_1}\left(\xi_i^{(h)}-\overline{\xi}^{(h)}_{n_1}\right)\left[\sum_{k_1=0}^{m_1}\cdots\sum_{k_d=0}^{m_1}\prod_{\ell=1}^{d}\II\left(\widehat{U}_{i\ell}\le k_{\ell}/m_1\right)P_{k_{\ell}, m_1}(u_{\ell})\right].	
\end{equation*}
Note that, ${\GG}_{n_1, m_1}$ is the empirical copula process when we have  information about margins. Define $G_{n_2}(\bm{u}), G_{n_2, m_2}(\bm{u}),\GG_{n_2, m_2}(\bm{u}), \GG_{n_2, m_2}^{(h)}(\bm{u}) $ similarly. The following proposition states that $\GG_{n_1, m_1}(\bm{u}), \GG_{n_2, m_2}(\bm{u})$ converge weakly to the Brownian bridge mentioned in preceding section. Moreover, the replicates are valid.

\begin{prop}\label{prop:2023-02-10, 7:47AM}
Suppose that $\lambda_{\bm{n}}\to \lambda$ and $  m_r=cn_r^{\alpha}$ with $c>0, \alpha>3/4$ for $r=1, 2$ as $\min(n_1, n_2)\to \infty$. Additionally,  \hyperref[ass:2022-07-05, 1:23PM]{Assumptions~\ref{ass:2022-07-05, 1:23PM}-\ref{ass:2}} are satisfied. Then for $\bm{u}\in [0, 1]^d$, 
\begin{enumerate}
\item  
\begin{equation*}
{\GG}_{n_1, m_1}(\bm{u})	\rightsquigarrow \BB_C(\bm{u}),\quad {\GG}_{n_2, m_2}(\bm{u})	\rightsquigarrow \BB_D(\bm{u}),
\end{equation*}

\item For each $h\in \{1,\ldots, H\}$,
\begin{equation*}
{\GG}^{(h)}_{n_1, m_1}(\bm{u})	\rightsquigarrow \BB_C(\bm{u}),\quad {\GG}^{(h)}_{n_2, m_2}(\bm{u})	\rightsquigarrow \BB_D(\bm{u}).
\end{equation*}
\end{enumerate}
\end{prop}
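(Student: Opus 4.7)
The plan is to reduce both claims to better-studied unsmoothed processes via the identity $G_{n_1, m_1} = B_{m_1}[G_{n_1}]$, where $B_{m_1}$ denotes the tensor-product $d$-variate Bernstein polynomial operator of degree $m_1$. Linearity of $B_{m_1}$ yields the exact decomposition
\begin{equation*}
\GG_{n_1, m_1}(\bm{u}) = B_{m_1}\bigl[\GG_{n_1}\bigr](\bm{u}) + \sqrt{n_1}\bigl(B_{m_1}[C](\bm{u}) - C(\bm{u})\bigr),
\end{equation*}
where $\GG_{n_1}(\bm{u}) = \sqrt{n_1}(G_{n_1}(\bm{u}) - C(\bm{u}))$ is the oracle empirical process built from the true margin-transformed observations $U_{i\ell} = F_\ell(X_{i\ell})$. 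An analogous decomposition holds for $\GG^{(h)}_{n_1, m_1}$ with $\GG_{n_1}$ replaced by the multiplier process $\tilde{\GG}^{(h)}_{n_1}$ defined below. Both assertions will follow from this decomposition, a Donsker-type limit for the unsmoothed process, and the fact that $B_{m_1}$ asymptotically acts as the identity on equicontinuous families.

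For claim~1, I would start from the classical multivariate Donsker theorem: since the vectors $(U_{i1}, \ldots, U_{id})$ are i.i.d.\ with joint distribution $C$, $\GG_{n_1} \rightsquigarrow \BB_C$ in $\ell^\infty([0, 1]^d)$. The deterministic bias is controlled via \hyperref[ass:2]{Assumption~\ref{ass:2}}: boundedness of the second-order partials (with the prescribed boundary control) yields the uniform Bernstein bound $\|B_{m_1}[C] - C\|_\infty = O(1/m_1)$, hence $\sqrt{n_1}\|B_{m_1}[C] - C\|_\infty = O(n_1^{1/2 - \alpha}) = o(1)$ because $\alpha > 3/4$. The remaining piece, $\|B_{m_1}[\GG_{n_1}] - \GG_{n_1}\|_\infty \xrightarrow{P} 0$, I would derive from the asymptotic equicontinuity of $\GG_{n_1}$ combined with the Bernstein moment identity $\sum_{k=0}^m (k/m - u)^2 P_{k, m}(u) = u(1-u)/m$; the latter supplies the sample-path bound $|B_{m_1} f(\bm{u}) - f(\bm{u})| \lesssim \omega_f(1/\sqrt{m_1})$ for any bounded uniformly continuous $f$, applied to realisations of $\GG_{n_1}$. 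Slutsky's lemma then closes the argument.

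For claim~2, the same factorization gives $\GG^{(h)}_{n_1, m_1} = B_{m_1}\bigl[\tilde{\GG}^{(h)}_{n_1}\bigr]$, where
\begin{equation*}
\tilde{\GG}^{(h)}_{n_1}(\bm{u}) = \frac{1}{\sqrt{n_1}} \sum_{i=1}^{n_1}\bigl(\xi_i^{(h)} - \bar{\xi}^{(h)}_{n_1}\bigr) \prod_{\ell=1}^d \II(\widehat{U}_{i\ell} \le u_\ell).
\end{equation*}
By the multivariate multiplier central limit theorem for the empirical copula process~\citep{Remillard2009,Kojadinovic2011}, $\tilde{\GG}^{(h)}_{n_1} \rightsquigarrow \BB_C$ unconditionally. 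The mechanism is transparent from the split $\tilde{\GG}^{(h)}_{n_1}(\bm{u}) = n_1^{-1/2}\sum_i (\xi_i^{(h)} - 1)\II(\widehat{U}_i \le \bm{u}) - \sqrt{n_1}(\bar{\xi}^{(h)}_{n_1} - 1)\, C_{n_1}(\bm{u})$: the first summand is an independent-multiplier process with covariance $C(\bm{u} \wedge \bm{v})$, while the correction from the centering $\bar{\xi}^{(h)}_{n_1}$ subtracts precisely the rank-one term $C(\bm{u})C(\bm{v})$, producing the Brownian-bridge covariance. The Bernstein smoothing argument of claim~1 then transfers verbatim with $\GG_{n_1}$ replaced by $\tilde{\GG}^{(h)}_{n_1}$, and the marginal statements for $\GG_{n_2, m_2}$ and $\GG^{(h)}_{n_2, m_2}$ follow by symmetry.

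The main obstacle will be the smoothing step $\|B_{m_1}[\GG_{n_1}] - \GG_{n_1}\|_\infty \xrightarrow{P} 0$: the asymptotic-equicontinuity modulus of the (oracle and multiplier) empirical processes must dominate the Bernstein mesh $1/\sqrt{m_1} = O(n_1^{-\alpha/2})$, which is exactly the balance that pins the hypothesis $\alpha > 3/4$ when paired with the $O(n_1^{1/2-\alpha})$ bias. For claim~2 there is an auxiliary difficulty in replacing the pseudo-observations $\widehat{U}_{i\ell}$ by the true $U_{i\ell}$ inside the multiplier sum; I would handle this through the standard hat-to-bar reduction (Lipschitz-in-margins perturbation) already underlying the multivariate extensions of~\citet{Remillard2009} and invoked in \hyperref[lem:2023-09-16, 7:59AM]{Lemma~\ref{lem:2023-09-16, 7:59AM}}.
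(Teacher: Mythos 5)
Your overall strategy is the same reduction the paper makes: decompose $\GG_{n_1,m_1}=B_{m_1}[\GG_{n_1}]+\sqrt{n_1}\,(B_{m_1}[C]-C)$ (and its multiplier analogue, whose factorization $\GG^{(h)}_{n_1,m_1}=B_{m_1}[\tilde{\GG}^{(h)}_{n_1}]$ is indeed exact), take the unsmoothed limits from the classical Donsker theorem and from R\'emillard--Scaillet's multiplier CLT, and show the smoothing is asymptotically negligible --- the paper routes that last step through \hyperref[lem:2023-09-16, 7:59AM]{Lemma~\ref{lem:2023-09-16, 7:59AM}}, i.e.\ through Proposition~3.1 of Segers et al.\ (2017) for the stochastic part and Lemma~3.1 of Kojadinovic--Stute for the bias. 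However, your handling of the bias term does not hold under the stated hypotheses. The bound $\|B_{m_1}[C]-C\|_\infty=O(1/m_1)$ requires second-order partial derivatives bounded on the closed cube, whereas \hyperref[ass:2]{Assumption~\ref{ass:2}} only gives continuity on $V_{d,j}\cap V_{d,\ell}$ with a blow-up of order $1/(u_j(1-u_j))$ at the boundary, so the Taylor-based $O(1/m)$ rate fails there. What the assumptions actually yield --- and what the paper uses --- is $\sup_{\bm{u}}\sqrt{n_1}\,|B_{m_1}[C](\bm{u})-C(\bm{u})|=O\bigl(n_1^{(3-4\alpha)/6}\bigr)$, i.e.\ a uniform bias of order $m_1^{-2/3}$; this is precisely where the threshold $\alpha>3/4$ comes from. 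Your closing remark that $\alpha>3/4$ is ``pinned'' by balancing an $O(n_1^{1/2-\alpha})$ bias against the smoothing mesh is therefore off: that rate would vanish already for $\alpha>1/2$. The conclusion survives once the correct rate is substituted, but as written the step both misquotes the rate and misattributes the role of the hypothesis.

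A second, smaller repair is needed in the smoothing step $\|B_{m_1}[\GG_{n_1}]-\GG_{n_1}\|_\infty\to 0$ in probability: you cannot apply the pathwise bound $|B_{m_1}f(\bm{u})-f(\bm{u})|\lesssim\omega_f(1/\sqrt{m_1})$ ``to realisations of $\GG_{n_1}$'', because those realisations are step functions, hence not uniformly continuous, and their pathwise modulus does not shrink. The correct mechanism (Segers et al.\ (2017), Proposition~3.1, invoked by the paper via \hyperref[lem:2023-09-16, 7:59AM]{Lemma~\ref{lem:2023-09-16, 7:59AM}}) combines asymptotic equicontinuity of the process \emph{in probability} with Chebyshev's inequality for the Bernstein measure $\mu_{m_1,\bm{u}}$, splitting the integral over $\{\max_\ell|w_\ell-u_\ell|\le\delta\}$ and its complement; your sketch gestures at equicontinuity, so the idea is recoverable, but the argument as stated would fail. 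With these two steps repaired, your treatment of claim~2 (unconditional multiplier CLT for $\tilde{\GG}^{(h)}_{n_1}$ plus the same negligibility transfer) matches the paper's proof in substance.
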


\begin{proof}
	The proof is postponed  to the \hyperref[app]{Appendix}. 
\end{proof}

Therefore, bootstrap replicates of $\CC_{n_1, m_1}$ for $h\in\{1, \ldots, H\}$ can be defined as 
\begin{equation*}
	\CC_{n_1, m_1}^{(h)}(\bm{u})={\GG}^{(h)}_{n_1, m_1}(\bm{u})-\sum_{\ell=1}^d{\GG}^{(h)}_{n_1, m_1}(\bm{u}^{\ell})\,\partial C_{n_1, m_1}(\bm{u})/\partial u_\ell,
\end{equation*}
where
\begin{align*}
\partial C_{n_1, m_1}(\bm{u})/\partial u_\ell&=m_1\sum_{k_1=0}^{m_1}\cdots \sum_{k_{\ell}=0}^{m_1-1}\cdots \sum_{k_{d}=0}^{m_1}\left\{C_{n_1}\left(\frac{k_1}{m_1},\ldots, \frac{k_{\ell}+1}{m_1},\ldots, \frac{k_d}{m_1}\right)\right.\\
	&\quad -\left. C_{n_1}\left(\frac{k_1}{m_1},\ldots, \frac{k_{\ell}}{m_1},\ldots, \frac{k_d}{m_1}\right)\right\}P_{k_1, m_1}(u_1)\cdots P_{k_{\ell}, m_1-1}(u_{\ell})\cdots P_{k_d, m_1}(u_d).
\end{align*}
Similarly, the replicates of $\DD_{n_2, m_2}$ are defined as  
\begin{equation*}
	\DD_{n_2, m_2}^{(h)}(\bm{u})={\GG}^{(h)}_{n_2, m_2}(\bm{u})-\sum_{\ell=1}^d{\GG}^{(h)}_{n_2, m_2}(\bm{u}^{\ell})\, \partial D_{n_2, m_2}(\bm{u})/\partial u_{\ell}.
\end{equation*}

The partial derivatives of the empirical Bernstein copula for bivariate case were studied in~\cite{Janssen2016}. Unlike the empirical copula, these empirical Bernstein copula partial derivatives can be calculated directly without any further estimation. The following proposition shows the uniform consistency of these partial derivatives.

\begin{prop}\label{prop:2023-02-10, 7:51AM}
Suppose that $\lambda_{\bm{n}}\to \lambda$ and $  m_r=cn_r^{\alpha}$ with $c>0, \alpha>3/4$ for $r=1, 2$ as $\min(n_1, n_2)\to \infty$. Additionally,  \hyperref[ass:2022-07-05, 1:23PM]{Assumptions~\ref{ass:2022-07-05, 1:23PM}-\ref{ass:2}} are satisfied. Then for any $\ell\in \{1, \ldots, d\}$ and $b_{n_r}=\kappa n_r^{-\beta}$ with $\kappa>0, \beta < \alpha/2$, as $\min(n_1, n_2)\to \infty$,
\begin{align*}
\sup_{\{\bm{u}\in [0, 1]^d: u_\ell\in [b_{n_1}, 1-b_{n_1}]\}}\left|\partial C_{n_1, m_1}(\bm{u})/\partial u_\ell-\dot{C}_{\ell}(\bm{u})\right|&\overset{a.s.}{=}O\left(m_1^{1/2}n_1^{-1/2}(\log \log n_1)^{1/2}\right),\\
\sup_{\{\bm{u}\in [0, 1]^d: u_\ell\in [b_{n_2}, 1-b_{n_2}]\}}\left|\partial D_{n_2, m_2}(\bm{u})/\partial u_\ell-\dot{D}_{\ell}(\bm{u})\right|&\overset{a.s.}{=}O\left(m_2^{1/2}n_2^{-1/2}(\log \log n_2)^{1/2}\right).
\end{align*}
\end{prop}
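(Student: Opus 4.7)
The plan is to decompose the difference into a deterministic (bias) part and a stochastic (variance) part by comparing against the Bernstein smoothing of the true copula. Define
\[
C^*_{m_1}(\bm u) \;=\; \sum_{k_1=0}^{m_1}\cdots \sum_{k_d=0}^{m_1} C\!\left(\tfrac{k_1}{m_1},\ldots,\tfrac{k_d}{m_1}\right)\prod_{\ell=1}^{d}P_{k_\ell,m_1}(u_\ell),
\]
and split
\[
\partial C_{n_1,m_1}(\bm u)/\partial u_\ell - \dot C_\ell(\bm u)
\;=\;\bigl[\partial C_{n_1,m_1}(\bm u)/\partial u_\ell - \partial C^*_{m_1}(\bm u)/\partial u_\ell\bigr]
+\bigl[\partial C^*_{m_1}(\bm u)/\partial u_\ell - \dot C_\ell(\bm u)\bigr].
\]
The second summand is purely deterministic; the first is the Bernstein-weighted average of grid increments of the empirical process $\alpha_{n_1}=\sqrt{n_1}(C_{n_1}-C)$.

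For the bias term, I would invoke the classical Voronovskaya-type expansion for the derivative of a Bernstein polynomial, which gives, pointwise, an $O(m_1^{-1})$ remainder whose constant is controlled by $\ddot C_{j\ell}$. Assumption~\ref{ass:2} supplies the bound $|\ddot C_{j\ell}(\bm u)| \le K/\min(u_j(1-u_j), u_\ell(1-u_\ell))$, so that uniformly over the set $\{u_\ell\in[b_{n_1},1-b_{n_1}]\}$ the bias is $O(m_1^{-1} b_{n_1}^{-1}) = O(n_1^{-\alpha+\beta})$. The constraint $\beta<\alpha/2$ ensures $-\alpha+\beta<(\alpha-1)/2$, so the bias is dominated by the target rate $m_1^{1/2} n_1^{-1/2}(\log\log n_1)^{1/2}$.

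For the stochastic term, rewrite it as
\[
m_1 n_1^{-1/2}\sum_{k_1,\ldots,k_d}\bigl[\alpha_{n_1}(\bm v_{k}^{+\ell})-\alpha_{n_1}(\bm v_{k})\bigr]\,\prod_{j\neq \ell}P_{k_j,m_1}(u_j)\,P_{k_\ell,m_1-1}(u_\ell),
\]
where $\bm v_k=(k_1/m_1,\ldots,k_d/m_1)$ and $\bm v_k^{+\ell}$ increments the $\ell$-th entry. A naive sup-norm bound using the Chung--Smirnov LIL for $\|C_{n_1}-C\|_\infty$ would yield $O(m_1 n_1^{-1/2}\sqrt{\log\log n_1})$, which is too large by a factor of $m_1^{1/2}$. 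The sharper bound comes from a \emph{local} modulus-of-continuity LIL for the multivariate empirical process (Stute 1984, Mason 1988 type),
\[
\sup_{\|\bm v-\bm v'\|\le 1/m_1}|\alpha_{n_1}(\bm v)-\alpha_{n_1}(\bm v')| \;\overset{a.s.}{=}\; O\!\bigl((m_1^{-1}\log\log n_1)^{1/2}\bigr),
\]
which is applicable because $m_1=cn_1^\alpha$ with $\alpha>3/4$ puts $1/m_1$ above the Csörg\H{o}--Révész threshold. Combining with the partition-of-unity identity $\sum_k \prod_j P_{k_j,\cdot}(u_j)=1$ yields the stochastic bound $O(m_1^{1/2} n_1^{-1/2}(\log\log n_1)^{1/2})$ uniformly in $\bm u$. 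Adding the two pieces gives the claim for $C_{n_1,m_1}$, and the identical argument, with $(n_1,m_1,C)$ replaced by $(n_2,m_2,D)$, yields the result for $D_{n_2,m_2}$.

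The main obstacle is the stochastic step: the pedestrian uniform-LIL bound loses a factor $m_1^{1/2}$, so the proof depends crucially on exploiting the $m_1^{-1}$ spacing of the Bernstein grid through a modulus-of-continuity LIL at precisely that scale. Verifying the hypotheses of such a result for the multivariate empirical process under the given range of $\alpha$, and controlling the Bernstein-weighted sum without losing the partition-of-unity cancellation, is the delicate part. Everything else (the Voronovskaya-type bias expansion, the boundary analysis via Assumption~\ref{ass:2}, and the balance between $\alpha$ and $\beta$) is bookkeeping.
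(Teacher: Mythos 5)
Your decomposition into a stochastic term and a Bernstein bias term is the same as the paper's, but your treatment of the stochastic term contains a genuine gap. The key lemma you invoke --- a local modulus-of-continuity LIL of the form $\sup_{\|\bm v-\bm v'\|\le 1/m_1}|\alpha_{n_1}(\bm v)-\alpha_{n_1}(\bm v')| \overset{a.s.}{=} O\bigl((m_1^{-1}\log\log n_1)^{1/2}\bigr)$ --- is not available at the polynomial scale $1/m_1 = c^{-1}n_1^{-\alpha}$. The Stute/Mason-type oscillation results for the (multivariate) empirical process over sets of probability content $a_n$ give $O\bigl((a_n\log(1/a_n))^{1/2}\bigr)$ when $a_n$ decays polynomially; the $\log\log n$ version only appears when $\log(1/a_n)=O(\log\log n)$, which fails here since $\log(1/a_n)\asymp \alpha\log n_1$. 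With the correct oscillation rate your argument yields $O\bigl(m_1^{1/2}n_1^{-1/2}(\log n_1)^{1/2}\bigr)$, which is weaker than the stated $O\bigl(m_1^{1/2}n_1^{-1/2}(\log\log n_1)^{1/2}\bigr)$. The paper avoids this machinery altogether: it performs a summation by parts so that the grid increments of $C_{n_1}-C$ are transferred onto the derivative of the Bernstein basis, then bounds the stochastic term by $\sup_{\bm u}|C_{n_1}(\bm u)-C(\bm u)|\cdot\sum_{k_1}|P'_{k_1,m_1}(u_1)|$, and uses the deterministic estimate $\sum_{k_1}|P'_{k_1,m_1}(u_1)|=O(m_1^{1/2})$ (Janssen et al., Lemma 1) together with the ordinary Chung--Smirnov LIL. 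That combination delivers exactly the claimed $\log\log$ rate with only the global sup-norm bound, so the ``lost factor $m_1^{1/2}$'' you worried about is recovered by a property of the Bernstein weights, not by a local empirical-process result.

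A secondary point: your bias bound $O(m_1^{-1}b_{n_1}^{-1})$ via a Voronovskaya-type expansion for the derivative of the Bernstein polynomial is not justified under Assumption~\ref{ass:2} alone, which gives only existence and a blowing-up bound for $\ddot C_{j\ell}$ on $V_{d,j}\cap V_{d,\ell}$, not the regularity (and control of Taylor remainders along segments that may approach the boundary in the unrestricted coordinates) that a second-order expansion requires. The paper instead uses a first-order argument: the binomial representation of the smoothed increment, a split into the events where the binomial point is within or beyond $\varepsilon_{n_1}=b_{n_1}/2$ of $\bm u$, the Lipschitz-type bound for $\dot C_1$ from Segers (2012, Lemma 4.3) with constant of order $b_{n_1}^{-1}$, and Cauchy--Schwarz, yielding $O(b_{n_1}^{-1}m_1^{-1/2})$. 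This weaker bias rate is what the paper actually proves, so even if you repair the stochastic term you should either substantiate the Voronovskaya step under the stated assumptions or fall back on the first-order bound.
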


\begin{proof}
	The proof is postponed  to the \hyperref[app]{Appendix}. 
\end{proof}

Combining~\hyperref[prop:2023-02-10, 7:47AM]{Propositions 1-2}, for $\bm{u}\in (0, 1)^d$, $\CC_{n_1, m_1}^{(h)}(\bm{u}), 	\DD_{n_2, m_2}^{(h)}(\bm{u})$ are valid replicates for $\CC_{n_1, m_1}^{}(\bm{u}), 	\DD_{n_2, m_2}^{}(\bm{u})$. Further, plugging $\CC_{n_1, m_1}^{(h)}(\bm{u}), 	\DD_{n_2, m_2}^{(h)}(\bm{u})$ into~\eqref{eq:2022-07-08, 10:39PM}, under some conditions (see conditions in~\hyperref[thm:2022-07-08, 10:37PM]{Theorem 4}), one has
\begin{equation*}
	\left(	\FF_{\bm{n}}^{\bm{m}},\FF_{\bm{n}}^{\bm{m}, (1)}, \ldots, \FF_{\bm{n}}^{\bm{m}, (H)}\right)\rightsquigarrow \left(\FF, \FF^{(1)}, \ldots, \FF^{(H)}\right),
\end{equation*}
where $\FF^{(1)}, \ldots, \FF^{(H)}$ are independent copies of $\FF$. Therefore, applying \hyperref[thm:2022-06-22, 12:31PM]{Theorem 1}, the asymptotic properties of replicates of proposed statistics are established in the following theorem.
\begin{thm}\label{thm:2022-07-08, 10:37PM}
	Suppose that $\lambda_{\bm{n}}\to \lambda$ and $m_r=cn_r^{\alpha} $ with $c>0, 3/4< \alpha <1$ for  $r=1, 2$ as $\min(n_1, n_2)\to \infty$. Additionally,  \hyperref[ass:2022-07-05, 1:23PM]{Assumptions~\ref{ass:2022-07-05, 1:23PM}-\ref{ass:2}} are satisfied. Then under the null hypothesis, as $\min(n_1, n_2)\to \infty$, for all $H\in \NN^+$,
	\begin{align*}
		\left(R_{\bm{n}}^{\bm{m}}, R_{\bm{n}}^{\bm{m}, (1)}, \ldots, R_{\bm{n}}^{\bm{m}, (H)}\right)&\rightsquigarrow \left(\FF_R, \FF_R^{(1)}, \ldots, \FF_R^{(H)}\right),\\
		\left(S_{\bm{n}}^{\bm{m}}, S_{\bm{n}}^{\bm{m}, (1)}, \ldots, S_{\bm{n}}^{\bm{m}, (H)}\right)&\rightsquigarrow \left(\FF_S, \FF_S^{(1)}, \ldots, \FF_S^{(H)}\right),\\
		\left(T_{\bm{n}}^{\bm{m}}, T_{\bm{n}}^{\bm{m}, (1)}, \ldots, T_{\bm{n}}^{\bm{m}, (H)}\right)&\rightsquigarrow \left(\FF_T, \FF_T^{(1)}, \ldots, \FF_T^{(H)}\right).
	\end{align*}
\end{thm}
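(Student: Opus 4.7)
The overall plan is to reduce the theorem to a joint weak convergence statement for the processes $\FF_{\bm{n}}^{\bm{m}}$ together with all $H$ bootstrap replicates $\FF_{\bm{n}}^{\bm{m},(h)}$, and then invoke the continuous mapping theorem (as in the proof of Theorem \ref{thm:2022-06-22, 12:31PM}) for the three functionals $R$, $S$, $T$. More precisely, the goal is to show that, in $\ell^{\infty}([0,1]^d)^{H+1}$,
\begin{equation*}
\left(\FF_{\bm{n}}^{\bm{m}},\FF_{\bm{n}}^{\bm{m},(1)},\ldots,\FF_{\bm{n}}^{\bm{m},(H)}\right)\rightsquigarrow\left(\FF,\FF^{(1)},\ldots,\FF^{(H)}\right),
\end{equation*}
with $\FF^{(1)},\ldots,\FF^{(H)}$ independent copies of $\FF$ under $\mathscr{H}_0$. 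Once this is in hand, the three displays of the theorem follow from the continuous mapping theorem, exactly as Theorem \ref{thm:2022-06-22, 12:31PM} followed from Lemma \ref{lem:2022-06-22, 12:05PM}, provided one also uses the uniform convergence $C_{n_1,m_1}\to C$ (which is a by-product of \hyperref[prop:2023-02-10, 7:47AM]{Proposition \ref{prop:2023-02-10, 7:47AM}}) to handle the random integrating measure in $S_{\bm{n}}^{\bm{m}}$.

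First I would establish joint convergence at the level of the auxiliary processes $\GG_{n_r,m_r}^{(h)}$. For the first sample, a straightforward extension of \hyperref[prop:2023-02-10, 7:47AM]{Proposition \ref{prop:2023-02-10, 7:47AM}} via the multivariate multiplier CLT (the vectors $\bm{\xi}_{n_1+n_2}^{(h)}$ are mutually independent, with unit mean and variance, and independent of the data) yields
\begin{equation*}
\left(\GG_{n_1,m_1},\GG_{n_1,m_1}^{(1)},\ldots,\GG_{n_1,m_1}^{(H)}\right)\rightsquigarrow\left(\BB_C,\BB_C^{(1)},\ldots,\BB_C^{(H)}\right),
\end{equation*}
where $\BB_C^{(h)}$ are independent copies of $\BB_C$, and analogously for the second sample in terms of $\BB_D^{(h)}$. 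By independence of the two samples and independence of the two multiplier blocks, these two joint convergences can be concatenated into a single joint convergence of all $2(H+1)$ auxiliary processes to a vector whose blocks are mutually independent.

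Next I would pass from the auxiliary processes to the $\CC_{n_1,m_1}^{(h)}$ and $\DD_{n_2,m_2}^{(h)}$ by combining the above display with \hyperref[prop:2023-02-10, 7:51AM]{Proposition \ref{prop:2023-02-10, 7:51AM}}. Since $\partial C_{n_1,m_1}(\bm{u})/\partial u_\ell\to\dot{C}_\ell(\bm{u})$ uniformly on $\{u_\ell\in[b_{n_1},1-b_{n_1}]\}$ with the stated rate, Slutsky's lemma and the decomposition
\begin{equation*}
\CC_{n_1,m_1}^{(h)}(\bm{u})=\GG_{n_1,m_1}^{(h)}(\bm{u})-\sum_{\ell=1}^{d}\GG_{n_1,m_1}^{(h)}(\bm{u}^\ell)\,\partial C_{n_1,m_1}(\bm{u})/\partial u_\ell
\end{equation*}
give $\CC_{n_1,m_1}^{(h)}\rightsquigarrow\CC^{(h)}$ on the interior, where the limit uses the true derivatives $\dot{C}_\ell$. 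The restriction $3/4<\alpha<1$ is used precisely here: it makes $b_{n_r}\to 0$ fast enough for the asymptotic equicontinuity of $\GG_{n_1,m_1}^{(h)}$ at the boundary to dominate, yet keeps $m_r^{1/2}n_r^{-1/2}(\log\log n_r)^{1/2}\to 0$. An identical argument (using the original representation $\CC_{n_1,m_1}=\GG_{n_1,m_1}-\sum_\ell \GG_{n_1,m_1}(\cdot^\ell)\dot C_\ell$ up to an asymptotically negligible remainder guaranteed by \hyperref[lem:2023-09-16, 7:59AM]{Lemma \ref{lem:2023-09-16, 7:59AM}}) handles $\CC_{n_1,m_1}$, and analogously for $\DD_{n_2,m_2}$ and its replicates. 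Taking the linear combinations that define $\FF_{\bm{n}}^{\bm{m}}$ and $\FF_{\bm{n}}^{\bm{m},(h)}$ and invoking the continuous mapping theorem then yields the desired joint weak convergence of $(H+1)$ $\FF$-processes, with asymptotic independence of the replicates inherited from the independence of the $\bm{\xi}^{(h)}$'s (and the coincidence $\CC\stackrel{d}{=}\DD$ under $\mathscr{H}_0$ ensures the $\FF^{(h)}$ are copies of $\FF$).

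The main obstacle I anticipate is controlling the interaction between the random integrating measure $C_{n_1,m_1}$ in $S_{\bm{n}}^{\bm{m}}$ and the joint weak limit, as well as the boundary contribution from the region $\{u_\ell\notin[b_{n_1},1-b_{n_1}]\}$ where the uniform consistency of $\partial C_{n_1,m_1}/\partial u_\ell$ in Proposition \ref{prop:2023-02-10, 7:51AM} is not available. For the former, it suffices to combine uniform consistency $\|C_{n_1,m_1}-C\|_\infty\to 0$ with the weak convergence of $(\FF_{\bm{n}}^{\bm{m}})^2$ and argue as in the proof of Theorem \ref{thm:2022-06-22, 12:31PM}; for the latter, one uses the stochastic equicontinuity of $\GG_{n_r,m_r}^{(h)}$ near the boundary, exactly as in the empirical copula analogue of \cite{Kojadinovic2011}, which forces the shrinking boundary strip to contribute $o_P(1)$ under the constraint $\alpha<1$.
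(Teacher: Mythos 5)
Your proposal is correct and follows essentially the same route as the paper, which establishes Theorem~\ref{thm:2022-07-08, 10:37PM} by combining Propositions~\ref{prop:2023-02-10, 7:47AM}--\ref{prop:2023-02-10, 7:51AM} to validate the replicates $\CC_{n_1,m_1}^{(h)}$, $\DD_{n_2,m_2}^{(h)}$, plugging them into~\eqref{eq:2022-07-08, 10:39PM} to get joint weak convergence of $\bigl(\FF_{\bm{n}}^{\bm{m}},\FF_{\bm{n}}^{\bm{m},(1)},\ldots,\FF_{\bm{n}}^{\bm{m},(H)}\bigr)$, and then applying the continuous mapping/functional delta method arguments of Theorem~\ref{thm:2022-06-22, 12:31PM}. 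Your additional details (the joint multiplier CLT across the $H$ replicates and the treatment of the shrinking boundary strips where Proposition~\ref{prop:2023-02-10, 7:51AM} does not apply) only make explicit steps the paper leaves implicit.
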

Clearly, p-values of the statistics can be computed as
\begin{equation*}
	\frac{1}{H}\sum_{h=1}^{H}\II\left(R_{\bm{n}}^{\bm{m}, (h)}\right),\quad \frac{1}{H}\sum_{h=1}^{H}\II\left(S_{\bm{n}}^{\bm{m}, (h)}\right),\quad \frac{1}{H}\sum_{h=1}^{H}\II\left(T_{\bm{n}}^{\bm{m}, (h)}\right).
\end{equation*}
It is worth noting that the conditions imposed on the Bernstein order, namely $m_r=cn_r^{\alpha} $ with $c>0, 3/4< \alpha <1$ for  $r=1, 2$ as $\min(n_1, n_2)\to \infty$. This particular condition renders the application of the proposed multiplier bootstrap method infeasible for empirical beta copula for which $m_r=n_r$. In the following section, an alternative resampling technique will be introduced that empowers interested readers to explore its potential application with empirical beta copula.

\section{Subsampling method}\label{sec:subsam}


\n An alternative to the bootstrap multiplier method, which is easier to implement, is the subsampling technique developed by~\cite{Kojadinovic2019}. This approach can be flexibly adapted to various smooth and weighted empirical copula processes, as demonstrated in the original paper. Unlike the multiplier method, it does not require the estimation of partial derivatives. Therefore, this method is valid for the empirical beta copula. In the subsequent section, we demonstrate the feasibility of applying this subsampling method to the empirical Bernstein copula process. Notably, this procedure offers comparable performance to the multiplier method while demanding significantly fewer computational resources and relying on fewer theoretical assumptions.

Sample $b_1<n_1, b_2<n_2$ observations from $X$ and $Y$ without replacement, respectively. In this way, the number of possible subsamples are $N_{b_1, n_1}=\binom{n_1}{b_1}, N_{b_2, n_2}=\binom{n_2}{b_2}$. Let $I_h\coloneqq(h_1, h_2)$ with $h_1\in\{1,\ldots, N_{b_1, n_1}\}, h_2\in \{1, \ldots, N_{b_2, n_2}\}$. If $m_{(r)}, r=1, 2$ are the Bernstein orders for subsamples of $X$ and $Y$ respectively, then define
\begin{equation*}
	\begin{aligned}
		C^{(I_h)}_{b_1, m_{(1)}}(\bm{u})&=\frac{1}{b_1}\sum_{i=1}^{b_1}\left[\sum_{k_1=0}^{m_{(1)}}\cdots\sum_{k_d=0}^{m_{(1)}}\prod_{\ell=1}^{d}\II\left(\widehat{U}_{i\ell}\le k_{\ell}/m_{(1)}\right)P_{k_{\ell}, m_{(1)}}(u_{\ell})\right],\\ 	
		D^{(I_h)}_{b_2, m_{(2)}}(\bm{u})&=\frac{1}{b_2}\sum_{i=1}^{b_2}\left[\sum_{k_1=0}^{m_{(2)}}\cdots\sum_{k_d=0}^{m_{(2)}}\prod_{\ell=1}^{d}\II\left(\widehat{V}_{i\ell}\le k_{\ell}/m_{(2)}\right)P_{k_{\ell}, m_{(2)}}(u_{\ell})\right],
	\end{aligned}
\end{equation*}
as the empirical Bernstein copulas for the subsample $h$. The corrected subsample replicates of the empirical Bernstein copula process can be defined according to these subsample replicates,
\begin{equation*}
	\CC_{b_1, m_{(1)}}^{(I_h)}(\bm{u})=\sqrt{\frac{b_1}{1-b_1/n_1}}\{C^{(I_h)}_{b_1, m_{(1)}}(\bm{u})-C_{n_1, m_{1}}(\bm{u})\},\quad  \DD_{b_2, m_{(2)}}^{(I_h)}(\bm{u})=\sqrt{\frac{b_2}{1-b_2/n_2}}\{D^{(I_h)}_{b_2, m_{(2)}}(\bm{u})-D_{n_2, m_{2}}(\bm{u})\}.
\end{equation*}
The validity of these replicates is provided in the following theorem.
\begin{thm}\label{thm:2022-07-10, 10:27PM}
Suppose that assumptions in \hyperref[lem:2022-06-22, 12:05PM]{Lemma~\ref{lem:2022-06-22, 12:05PM}} are satisfied, and that $b_1, b_2$ go to infinity as $\min(n_1, n_2)\to \infty$. Let $I_1, \ldots, I_H$ be independent bivariate random vectors with associated component being independent of $X, Y$ and uniformly distributed on the set $\{1, \ldots,N_{b_1, n_1}\}, \{1, \ldots, N_{b_2, n_2}\}$. Then
	\begin{align*}
		\left(\CC_{n_1,m_1}, \CC_{b_1, m_{(1)}}^{(I_1)}, \ldots, \CC_{b_1, m_{(1)}}^{(I_H)}\right)&\rightsquigarrow\left(\CC, \CC^{(1)}, \ldots, \CC^{(H)}\right),\\
		\left(\DD_{n_2,m_2}, \DD_{b_2, m_{(2)}}^{(I_1)}, \ldots, \DD_{b_2, m_{(2)}}^{(I_H)}\right)&\rightsquigarrow\left(\DD, \DD^{(1)}, \ldots, \DD^{(H)}\right).
	\end{align*}
\end{thm}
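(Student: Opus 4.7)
The plan is to adapt the subsampling framework of \cite{Kojadinovic2019} (originally developed for the empirical copula and related smooth copula processes) to the empirical Bernstein copula setting. The argument naturally splits into three parts: first handle the $X$-sample in isolation, then extend to $H$ independent subsamples, and finally combine with the $Y$-sample by independence. Throughout I will work in $\ell^{\infty}([0,1]^d)^{H+1}$.

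For the $X$-sample, I would start by noting that, since the Bernstein smoothing operator is a deterministic bounded linear map applied to an empirical copula, the subsample empirical Bernstein copula $C^{(I_h)}_{b_1, m_{(1)}}$ is just the Bernstein transform of the subsample empirical copula. A subsample of size $b_1$ drawn uniformly without replacement from $n_1$ i.i.d. observations is itself exchangeable, and marginally equidistributed with an i.i.d.\ sample of size $b_1$ from the same law. Hence, by the weak-convergence result of \cite{Segers2017} recalled in \eqref{eq:2023-09-16, 11:13AM} applied to the subsample, $\sqrt{b_1}\{C^{(I_h)}_{b_1, m_{(1)}} - C\} \rightsquigarrow \CC$ in $\ell^{\infty}([0,1]^d)$ provided $b_1 \to \infty$ and the Bernstein order for the subsample scales as $m_{(1)} = c b_1^{\alpha}$.

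The crucial step is asymptotic independence of the corrected replicate $\CC_{b_1, m_{(1)}}^{(I_h)}$ from the full process $\CC_{n_1, m_1}$. I would write
\begin{equation*}
\CC_{b_1, m_{(1)}}^{(I_h)} = \sqrt{\frac{b_1}{1-b_1/n_1}}\,\{C^{(I_h)}_{b_1, m_{(1)}} - C\} - \sqrt{\frac{b_1}{1-b_1/n_1}}\,\{C_{n_1, m_1} - C\},
\end{equation*}
and use the classical finite-population variance identity, which forces the limiting covariance between $\CC_{b_1, m_{(1)}}^{(I_h)}$ and $\CC_{n_1, m_1}$ to vanish. This is precisely what the correction factor $\sqrt{b_1/(1-b_1/n_1)}$ is calibrated to achieve. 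Joint tightness in $\ell^{\infty}([0,1]^d)^2$ is inherited from the tightness of the empirical Bernstein copula process already established en route to Lemma~\ref{lem:2022-06-22, 12:05PM}, and combined with Gaussianity of the limit and vanishing cross-covariance, Gaussian asymptotic independence gives $(\CC_{n_1,m_1}, \CC^{(I_h)}_{b_1,m_{(1)}}) \rightsquigarrow (\CC, \CC^{(1)})$ with $\CC^{(1)}$ an independent copy of $\CC$.

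The extension to $H$ replicates follows by applying the same calculation to each pair $(I_h, I_{h'})$: because the subsampling indices are drawn independently, the cross-covariances between $\CC_{b_1, m_{(1)}}^{(I_h)}$ and $\CC_{b_1, m_{(1)}}^{(I_{h'})}$ ($h \neq h'$) vanish asymptotically, yielding $H$ mutually independent copies of $\CC$ jointly with the original process. The analogous statement for the $Y$-sample follows identically, and the two joint convergence statements combine into the one displayed in the theorem by the independence of the $X$ and $Y$ samples together with the continuous mapping theorem. The main obstacle I anticipate is a clean proof of the asymptotic independence: the Bernstein smoothing operator has to be pushed through the finite-population covariance computation uniformly over $\bm{u} \in [0,1]^d$. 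This amounts to the Bernstein analogue of \citet[Proposition 3.1]{Kojadinovic2019} and requires controlling the smoothing bias at the rate $m_r = cn_r^{\alpha}$, $\alpha \geq 1$, so that it remains negligible relative to the $\sqrt{b_r}$ stochastic fluctuations of the subsample process.
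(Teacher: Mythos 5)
There is a genuine gap. Your argument gets the marginal convergence of each replicate (exchangeability of a without-replacement subsample plus \citet{Segers2017}) and correctly identifies asymptotic independence as the crux, but then you never actually prove it: the step ``vanishing cross-covariance plus Gaussianity of the limit gives independence'' presupposes \emph{joint} weak convergence of $\bigl(\CC_{n_1,m_1},\CC^{(I_1)}_{b_1,m_{(1)}},\ldots,\CC^{(I_H)}_{b_1,m_{(1)}}\bigr)$ to a jointly Gaussian limit, and that joint finite-dimensional CLT for the full-sample process together with the corrected subsample processes is precisely the nontrivial content of the theorem (marginal tightness of each coordinate does give joint tightness, but uncorrelatedness of marginally Gaussian limits proves nothing without joint Gaussianity). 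You acknowledge this yourself by deferring it to ``the Bernstein analogue of \citet[Proposition 3.1]{Kojadinovic2019}'', i.e.\ you propose to redo the finite-population covariance and H\'ajek-type arguments of the subsampling theory with the Bernstein smoother pushed through them; as written, that key lemma is asserted, not established, so the proposal does not yet constitute a proof. Note also that the second term in your decomposition, $\sqrt{b_1/(1-b_1/n_1)}\,\{C_{n_1,m_1}-C\}=\sqrt{\tfrac{b_1/n_1}{1-b_1/n_1}}\,\CC_{n_1,m_1}$, is \emph{not} negligible when $b_1/n_1\to\gamma\in(0,1)$, so the independence really does hinge on an exact asymptotic cancellation that has to be verified at the process level.

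The paper avoids all of this by a much shorter reduction: it takes the joint convergence of $\bigl(\CC_{n_1},\CC^{(I_1)}_{b_1},\ldots,\CC^{(I_H)}_{b_1}\bigr)$ for the \emph{unsmoothed} empirical copula process directly from \citet[Theorem 1]{Kojadinovic2019}, and then shows the Bernstein smoothing is asymptotically negligible at the relevant scaling, namely, by \citet[Theorem 3.6]{Segers2017},
\begin{equation*}
\sqrt{\tfrac{b_1}{1-b_1/n_1}}\bigl\{C^{(I_h)}_{b_1,m_{(1)}}(\bm{u})-C(\bm{u})\bigr\}
=\sqrt{\tfrac{b_1}{1-b_1/n_1}}\bigl\{C^{(I_h)}_{b_1}(\bm{u})-C(\bm{u})\bigr\}+o_p(1),
\end{equation*}
and similarly for $C_{n_1,m_1}$ versus $C_{n_1}$, whence $\CC^{(I_h)}_{b_1,m_{(1)}}=\CC^{(I_h)}_{b_1}+o_p(1)$ and the stated joint convergence follows; the two samples are then combined by independence exactly as you propose. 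If you want to salvage your route, you must either supply the Bernstein analogue of the subsampling covariance/CLT argument in full, or, far more economically, replace it by this asymptotic-equivalence step so that the existing empirical-copula subsampling theorem can be invoked verbatim.
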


\begin{proof}
	The proof is postponed  to the \hyperref[app]{Appendix}. 
\end{proof}

Plug $\CC_{b_1, m_{(1)}}^{(I_h)}(\bm{u}), 	\DD_{b_2, m_{(2)}}^{(I_h)}(\bm{u})$ into~\eqref{eq:2022-07-08, 10:39PM}, and let $\tilde{m}=(m_{(1)}, m_{(2)})$. Then
\begin{equation*}
	\left(	\FF_{b_1, b_2}^{\tilde{m}},\FF_{b_1, b_2}^{\tilde{m}, (I_1)}, \ldots, \FF_{b_1, b_2}^{\tilde{m}, (I_H)}\right)\rightsquigarrow \left(\FF, \FF^{(I_1)}, \ldots, \FF^{(I_H)}\right),
\end{equation*}
where $\FF^{(I_1)}, \ldots, \FF^{(I_H)}$ are independent copies of $\FF$.
Clearly, using these replicates one can readily follow the strategies in \hyperref[sec:multiplier]{Section~\ref{sec:multiplier}} to construct valid replicates of the three proposed statistics. We skip them to reduce repetition.

\section{Simulation study}
\label{sec:simulation}

\n Finite sample performance is investigated in this section. All tests were conducted with $ 500 $
repetitions under $5\%$ nominal level using $H = 200$ multiplier replicates. Also given that the statistics
shown in~\eqref{eq:2023-01-24} involve integration, a discrete approximation is applied with a grid spanning a range of $20\times\cdots \times 20$ points on $[0, 1]^d$.
To show the improvement of  empirical Bernstein copula tests, empirical level and power of the proposed
tests were compared with tests in~\cite{Remillard2009}.

The selection of an optimal Bernstein order is important and left for future study. According to the recommendation of~\cite{Segers2017}, we set $m_r=\lfloor n_r/5 \rfloor, r=1, 2$, where $\lfloor a\rfloor$ denote the largest integer less than or equal to $a$. And for the subsample sizes of the subsampling method, we follow the recommendation in~\cite{Kojadinovic2019} with $b_r=\lfloor0.28\times n_r \rfloor$ and associated Bernstein order $m_{(r)}=b_r$ for convenience.

In Table~\ref{table:1} and~\ref{table:2}, it is evident that all the tests effectively maintain their nominal level. Notably, the proposed Bernstein test is particularly preferable, given that the Cr\'{a}mer-von Mises type statistics consistently exhibit the highest power (indicated by bold font). When considering the Clayton copula from the Archimedean family, $S_{\bm{n}, mul}^{\bm{m}}$ outperforms other statistics across most cases. Conversely, for the Gaussian copula from the elliptical family, $R_{\bm{n}, mul}^{\bm{m}}$ generally outperforms the other statistics. In the case of $3$-dimensional scenarios, the superiority of the Bernstein tests slightly diminishes, but they still demonstrate an advantage over empirical copula-based tests, especially for the Gaussian copula models.

When comparing the two resampling methods, it is important to consider their respective trade-offs. The subsampling method exhibits slightly inferior performance compared to the multiplier method but offers the advantage of reduced computational burden. This finding is consistent with the conclusions drawn by~\cite{Kojadinovic2019}. In summary, for small or moderate sample sizes, we recommend utilizing the multiplier method due to its superior performance. However, for large sample sizes, the subsampling method serves as a suitable alternative, striking a balance between computational difficulty and performance.

\begin{table}[H]
	\centering
		\caption{Empirical level ($\%$) and power ($\%$) of the tests are estimated from 500 samples from $2$-dimensional Clayton copula given various Kendall's tau with different sample sizes.}
	\label{table:1}
	\begin{tabular}[t]{lcccccccccc}
		\toprule[1.5pt]
		Model&$\tau$&$R_{\bm{n}, mul}$&$R_{\bm{n}, mul}^{\bm{m}}$ &$R_{\bm{n}, sub}^{\bm{m}}$ &$S_{\bm{n}, mul}$&$S_{\bm{n}, mul}^{\bm{m}}$ &$S_{\bm{n}, sub}^{\bm{m}}$&$T_{\bm{n}, mul}$&$T_{\bm{n}, mul}^{\bm{m}}$ &$T_{\bm{n}, sub}^{\bm{m}}$ \\
		\midrule
		$(n_1, n_2)=(50, 50)$ &0.2\\
		\multirow{7}{*}{$\rm{Clayton}$}&${0.2}$&$2.6$&$5.6$ &$3.4$&$2.4$&$5.6$&$3.4$&$1.0$&$5.0$&$3.2$ \\
		&$0.3$&$3.4$&$8.8$ &$6.4$&$2.2$&$\mathbf{9.0}$&$5.8$&$1.6$&$6.8$&$4.8$ \\
		&$0.4$&$16.0$&$26.2$ &$20.4$&$9.8$&$\mathbf{27.8}$&$20.4$&$7.6$&$21.0$&$12.4$ \\
		&$0.5$&$40.8$&$60.6$ &$51.2$&$32.0$&$\mathbf{61.2}$&$51.0$&$24.6$&$51.2$&$43.0$ \\
		&$0.6$&$72.0$&$86.6$ &$81.4$&$63.4$&$\mathbf{87.2}$&$80.8$&$43.8$&$77.8$&$69.4$ \\
		&$0.7$&$88.6$&$\mathbf{97.6}$ &$95.4$&$83.6$&$97.4$&$95.0$ &$67.4$&$92.8$&$89.0$\\
		&$0.8$&$99.0$&$99.8$ &$99.4$&$97.6$&$\mathbf{99.8}$&$99.8$ &$90.6$&$99.2$&$98.8$\\
		\midrule
		$(n_1, n_2)=(100, 50)$ &0.2\\
		\multirow{7}{*}{$\rm{Clayton}$}&${0.2}$&$0.8$&$5.0$ &$2.0$&$1.2$&$4.6$&$1.8$&$0.8$&$3.2$&$0.8$ \\
		&$0.3$&$5.0$&$\mathbf{8.2}$ &$6.0$&$4.0$&$8.0$&$5.2$&$4.0$&$6.6$&$4.6$ \\
		&$0.4$&$22.2$&$\mathbf{26.4}$ &$24.0$&$19.8$&$27.2$&$24.2$&$11.4$&$20.8$&$16.0$ \\
		&$0.5$&$56.6$&$60.0$ &$56.8$&$51.4$&$\mathbf{62.2}$&$55.2$&$33.8$&$46.8$&$41.8$ \\
		&$0.6$&$84.4$&$89.0$ &$85.8$&$83.6$&$\mathbf{89.6}$&$86.2$&$66.2$&$82.0$&$76.8$ \\
		&$0.7$&$98.4$&$99.2$ &$99.2$&$98.2$&$\mathbf{99.4}$&$99.2$ &$91.4$&$97.0$&$96.4$\\
		&$0.8$&$99.8$&$100.0$ &$100.0$&$99.8$&$\mathbf{100.0}$&$100.0$ &$99.0$&$100.0$&$99.6$\\
		\midrule
		$(n_1, n_2)=(150, 100)$ &0.2\\
		\multirow{7}{*}{$\rm{Clayton}$}&${0.2}$&$2.0$&$2.8$ &$2.6$&$1.8$&$3.4$&$2.2$&$1.2$&$2.6$&$1.2$ \\
		&$0.3$&$15.2$&$\mathbf{18.0}$ &$15.2$&$11.8$&$17.0$&$14.2$&$9.2$&$13.6$&$11.0$ \\
		&$0.4$&$52.6$&$57.4$ &$54.2$&$48.6$&$\mathbf{59.4}$&$54.8$&$33.0$&$45.4$&$39.6$ \\
		&$0.5$&$92.4$&$94.2$ &$92.4$&$89.8$&$\mathbf{94.6}$&$93.2$&$72.2$&$88.0$&$84.2$ \\
		&$0.6$&$99.2$&$99.8$ &$99.4$&$98.6$&$\mathbf{99.8}$&$99.4$&$94.8$&$98.8$&$98.0$ \\
		&$0.7$&$100.0$&$100.0$ &$100.0$&$100.0$&$\mathbf{100.0}$&$100.0$ &$99.8$&$100.0$&$100.0$\\
		&$0.8$&$100.0$&$100.0$ &$100.0$&$100.0$&$\mathbf{100.0}$&$100.0$ &$100.0$&$100.0$&$100.0$\\
		\bottomrule[1.5pt]
	\end{tabular}
\end{table}

\begin{table}[H]
	\centering
		\caption{Empirical level ($\%$) and power ($\%$) of the tests are estimated from 500 samples of $2$-dimensional Gaussian copula given various Kendall's tau with different sample sizes.}
	\label{table:2}
	\begin{tabular}[t]{lcccccccccc}
		\toprule[1.5pt]
		Model&$\tau$&$R_{\bm{n}, mul}$&$R_{\bm{n}, mul}^{\bm{m}}$ &$R_{\bm{n}, sub}^{\bm{m}}$ &$S_{\bm{n}, mul}$&$S_{\bm{n}, mul}^{\bm{m}}$ &$S_{\bm{n}, sub}^{\bm{m}}$&$T_{\bm{n}, mul}$&$T_{\bm{n}, mul}^{\bm{m}}$ &$T_{\bm{n}, sub}^{\bm{m}}$ \\
		\midrule
		$(n_1, n_2)=(50, 50)$ &0.2\\
		\multirow{7}{*}{$\rm{Gaussian}$}&${0.2}$&$1.8$&$4.6$ &$3.4$&$2.6$&$4.4$&$3.2$&$1.2$&$4.2$&$1.8$ \\
		&$0.3$&$3.8$&$9.8$ &$6.2$&$3.0$&$\mathbf{10.2}$&$5.8$&$1.6$&$7.0$&$3.6$ \\
		&$0.4$&$14.2$&$\mathbf{26.6}$ &$17.4$&$8.4$&$25.8$&$17.8$&$5.0$&$18.2$&$11.6$ \\
		&$0.5$&$40.4$&$\mathbf{60.4}$ &$52.4$&$30.0$&$60.2$&$50.8$&$17.4$&$45.4$&$35.6$ \\
		&$0.6$&$69.2$&$\mathbf{86.8}$ &$81.0$&$59.2$&$86.2$&$81.0$&$41.6$&$74.0$&$67.0$ \\
		&$0.7$&$93.4$&$\mathbf{99.0}$ &$97.6$&$90.2$&$99.0$&$98.4$ &$68.0$&$95.0$&$91.6$\\
		&$0.8$&$98.4$&$\mathbf{100.0}$ &$100.0$&$98.2$&$100.0$&$99.8$ &$87.2$&$99.8$&$99.4$\\
		\midrule
		$(n_1, n_2)=(100, 50)$ &0.2\\
		\multirow{7}{*}{$\rm{Gaussian}$}&${0.2}$&$2.0$&$5.6$ &$3.2$&$2.2$&$5.0$&$4.2$&$0.8$&$3.4$&$1.4$ \\
		&$0.3$&$5.6$&$\mathbf{8.4}$ &$5.8$&$4.4$&$7.8$&$5.4$&$3.0$&$5.8$&$2.8$ \\
		&$0.4$&$23.6$&$\mathbf{30.4}$ &$24.4$&$21.6$&$28.6$&$23.2$&$11.2$&$19.8$&$12.8$ \\
		&$0.5$&$60.4$&$66.4$ &$61.6$&$56.0$&$\mathbf{68.0}$&$61.2$&$29.8$&$50.4$&$43.4$ \\
		&$0.6$&$89.6$&$92.4$ &$89.8$&$87.0$&$\mathbf{92.4}$&$89.6$&$62.2$&$82.0$&$76.6$ \\
		&$0.7$&$97.4$&$99.6$ &$99.8$&$96.8$&$\mathbf{99.8}$&$99.0$ &$87.2$&$96.6$&$94.4$\\
		&$0.8$&$100.0$&$100.0$ &$100.0$&$99.8$&$\mathbf{100.0}$&$100.0$ &$97.8$&$99.6$&$99.8$\\
		\midrule
		$(n_1, n_2)=(150, 100)$ &0.2\\
		\multirow{7}{*}{$\rm{Gaussian}$}&${0.2}$&$2.4$&$5.2$ &$3.8$&$3.2$&$5.6$&$3.6$&$1.4$&$3.8$&$2.6$ \\
		&$0.3$&$13.8$&$\mathbf{18.0}$ &$14.2$&$12.8$&$17.8$&$15.2$&$7.6$&$11.6$&$10.4$ \\
		&$0.4$&$50.6$&$57.4$ &$53.6$&$48.2$&$\mathbf{58.4}$&$54.2$&$29.6$&$44.2$&$39.0$ \\
		&$0.5$&$89.4$&$\mathbf{92.6}$ &$91.0$&$85.8$&$92.0$&$91.2$&$66.4$&$81.4$&$77.6$ \\
		&$0.6$&$99.4$&$\mathbf{99.8}$ &$99.6$&$99.6$&$99.6$&$99.6$&$92.8$&$98.6$&$98.0$ \\
		&$0.7$&$100.0$&$\mathbf{100.0}$ &$100.0$&$100.0$&$100.0$&$100.0$ &$99.8$&$100.0$&$100.0$\\
		&$0.8$&$100.0$&$\mathbf{100.0}$ &$100.0$&$100.0$&$100.0$&$100.0$&$100.0$ &$100.0$&$100.0$\\
		\bottomrule[1.5pt]
	\end{tabular}

\end{table}

\begin{table}[H]
	\centering
		\caption{Empirical level ($\%$) and power ($\%$) of the tests are estimated from 500 samples from $3$-dimensional Clayton and Gaussian copula given various parameters $50$ sample size.}
	\label{table:3}
	\begin{tabular}[t]{lcccccccccc}
		\toprule[1.5pt]
		Model&$\theta$&$R_{\bm{n}, mul}$&$R_{\bm{n}, mul}^{\bm{m}}$ &$R_{\bm{n}, sub}^{\bm{m}}$ &$S_{\bm{n}, mul}$&$S_{\bm{n}, mul}^{\bm{m}}$ &$S_{\bm{n}, sub}^{\bm{m}}$&$T_{\bm{n}, mul}$&$T_{\bm{n}, mul}^{\bm{m}}$ &$T_{\bm{n}, sub}^{\bm{m}}$ \\
		\midrule
		$(n_1, n_2)=(50, 50)$ &$1.00$\\
		\multirow{7}{*}{$\rm{Clayton}$}&${1.00}$&$3.6$&$2.6$ &$1.8$&$2.6$&$2.8$&$1.6$&$2.2$&$2.0$&$0.4$ \\
		&$1.25$&$5.8$&$6.2$ &$4.6$&$4.6$&$\mathbf{6.8}$&$5.2$&$3.6$&$4.0$&$2.4$ \\
		&$1.50$&$11.4$&$\mathbf{13.4}$ &$10.2$&$7.6$&$12.6$&$9.8$&$7.6$&$9.0$&$5.6$ \\
		&$1.75$&$21.6$&$21.2$ &$18.0$&$13.2$&$\mathbf{22.8}$&$18.2$&$10.8$&$15.2$&$9.6$ \\
		&$2.00$&$\mathbf{28.8}$&$27.8$ &$23.6$&$17.6$&$27.2$&$24.2$&$13.4$&$17.0$&$12.0$ \\
		&$2.25$&$\mathbf{44.6}$&$42.0$ &$38.4$&$30.6$&$42.8$&$40.0$ &$21.8$&$29.6$&$21.0$\\
		&$2.50$&$50.0$&$\mathbf{51.6}$ &$42.0$&$40.4$&$47.6$&$43.2$ &$27.6$&$33.2$&$28.4$\\
		\midrule
		$(n_1, n_2)=(50, 50)$ &$0.00$\\
		\multirow{7}{*}{$\rm{Gaussian}$}&${0.00}$&$2.6$&$4.0$ &$2.4$&$3.0$&$4.2$&$2.2$&$1.4$&$3.0$&$0.8$ \\
		&$0.05$&$4.4$&$\mathbf{7.4}$ &$3.2$&$3.4$&$6.4$&$4.0$&$2.8$&$3.6$&$1.4$ \\
		&$0.10$&$5.8$&$\mathbf{11.4}$ &$6.6$&$3.0$&$11.4$&$6.0$&$4.0$&$6.8$&$2.2$ \\
		&$0.15$&$13.2$&$\mathbf{18.2}$ &$9.4$&$4.6$&$17.8$&$9.2$&$5.6$&$11.2$&$5.2$ \\
		&$0.20$&$19.6$&$\mathbf{26.2}$ &$18.0$&$9.6$&$25.2$&$17.4$&$9.8$&$15.0$&$7.6$ \\
		&$0.25$&$30.4$&$\mathbf{37.8}$ &$26.0$&$17.8$&$36.0$&$28.6$ &$17.4$&$25.2$&$13.4$\\
		&$0.30$&$47.4$&$\mathbf{52.8}$ &$43.2$&$28.2$&$52.6$&$42.0$ &$24.2$&$37.0$&$22.6$\\
		\bottomrule[1.5pt]
	\end{tabular}

\end{table}

\section{Concluding remarks}

New tests for equality of multivariate copulas, based on Bernstein polynomials, have been proposed and studied. These tests improve upon the empirical tests introduced by~\cite{Remillard2009} and demonstrate better performance in simulation studies. The limiting distribution of the proposed test statistics has been thoroughly investigated, and resampling methods using Bernstein polynomials have been developed and implemented to accurately simulate p-values.

However, the performance of these tests deteriorates as the dimensionality increases. It is essential to explore methods to mitigate the impact of dimensionality in nonparametric copula tests. Moreover, it would be intriguing to extend our results to $k$-sample equality tests of copulas (as seen in~\citetalias{Bouzebda&Keziou&Zari2011}, \cite{Bakam2021} and~ \cite{Derumigny2022}) and to equality tests of copulas involving a large number of populations (as explored in~\cite{Zhan2014},~\cite{ Cousido2019} and~\cite{ Jimenez2022}).

\section*{Appendix: Proofs of the results} \label{app}
\addcontentsline{toc}{section}{Appendix: Proofs} 

\renewcommand{\thesubsection}{A.\arabic{subsection}} 

\begin{lem}\label{lem:2022-06-23, 3:36AM}
	Let $C$  be a copula function with continuous margins, then
	\begin{equation*}
		\sup_{\bm{u}\in [0, 1]^d}\left|{C}_{n, m}(\bm{u})-C(\bm{u})\right|\overset{a.s.}{=}O\left(\sqrt{\frac{\log\log n}{n}}\right)+O\left(m^{-1/2}\right)
	\end{equation*}
as $n\to \infty$.
\end{lem}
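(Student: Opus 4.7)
The plan is to split $C_{n,m}(\bm u)-C(\bm u)$ into a stochastic part and a deterministic Bernstein approximation bias, and bound each of them separately at the stated rate. Writing $B_m$ for the multivariate Bernstein operator of order $m$ (so that $C_{n,m}=B_m[C_n]$ and the Bernstein approximation to the true copula is $B_m[C]$), I would use
\begin{equation*}
C_{n,m}(\bm u)-C(\bm u) \;=\; \bigl(B_m[C_n](\bm u)-B_m[C](\bm u)\bigr) \;+\; \bigl(B_m[C](\bm u)-C(\bm u)\bigr).
\end{equation*}
This additive split is natural because it isolates the only source of randomness (the empirical copula $C_n$) from a purely approximation-theoretic object (Bernstein approximation of the known function $C$).

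For the stochastic term I would exploit the fact that $B_m$ is a positive linear operator with $B_m[1]\equiv 1$, hence a sup-norm contraction:
\begin{equation*}
\sup_{\bm u\in[0,1]^d}\bigl|B_m[C_n](\bm u)-B_m[C](\bm u)\bigr|\;\le\;\sup_{\bm v\in[0,1]^d}|C_n(\bm v)-C(\bm v)|.
\end{equation*}
Then I would invoke the law of the iterated logarithm for the (pseudo-observation) empirical copula (Stute/Deheuvels-type), which gives $\sup_{\bm v}|C_n(\bm v)-C(\bm v)|\overset{a.s.}{=}O(\sqrt{\log\log n/n})$ under continuity of the margins. The passage from true uniforms to pseudo-observations contributes only $\sum_\ell\|F_{n,\ell}^{-1}\circ F_\ell-\text{id}\|_\infty$, which itself satisfies the same LIL bound, so the rate is preserved.

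For the deterministic bias I would use the classical uniform Bernstein-approximation estimate
\begin{equation*}
\sup_{\bm u\in[0,1]^d}\bigl|B_m[f](\bm u)-f(\bm u)\bigr|\;\lesssim\;\omega(f,m^{-1/2}),
\end{equation*}
where $\omega$ denotes the modulus of continuity. Since any copula $C$ satisfies $|C(\bm u)-C(\bm v)|\le\sum_{\ell=1}^d|u_\ell-v_\ell|$ (each partial derivative, where it exists, lies in $[0,1]$), $C$ is Lipschitz and hence $\omega(C,m^{-1/2})=O(m^{-1/2})$, giving the $O(m^{-1/2})$ bound on the bias term. Summing the two contributions yields the claim.

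\textbf{Anticipated obstacle.} The only subtle point I foresee is the LIL bound on the pseudo-observation empirical copula rather than on a genuine multivariate empirical distribution with uniform margins; the standard proof handles this via a straightforward oscillation argument exploiting Lipschitzness of $C$ together with the univariate LIL applied to each $F_{n,\ell}$, so the rate carries over without loss. Everything else is an invocation of the contractivity of $B_m$ and a textbook Bernstein approximation estimate for Lipschitz functions, so no delicate bias/variance balancing appears in this lemma; the interplay with the Bernstein order $m$ only becomes relevant when one wants the refined $\alpha>3/4$ regime in Lemma~\ref{lem:2023-09-16, 7:59AM}, which is a separate matter.
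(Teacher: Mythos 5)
Your proposal is correct and follows essentially the same route as the paper: the identical decomposition $C_{n,m}-C=(B_m[C_n]-B_m[C])+(B_m[C]-C)$, the contraction of the Bernstein operator combined with the Deheuvels-type LIL for the empirical copula for the stochastic part, and a Lipschitz-plus-binomial-concentration bound (which is exactly the content of the modulus-of-continuity estimate you cite, derived in the paper via Cauchy--Schwarz) giving $O(m^{-1/2})$ for the bias. Your extra care about pseudo-observations is a fine point the paper subsumes in its citation, but it does not change the argument.
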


\begin{proof}

 From~\cite{Deheuvels1981}  and reference therein, a multivariate version of Glivenko-Cantelli theorem holds uniformly, $i.e.$,
\begin{equation*}
	\sup_{\bm{u}\in [0, 1]^d}\left| C_n(\bm{u})-C(\bm{u})\right|\overset{a.s.}{=}O\left(\sqrt{\frac{\log\log n}{n}}\right).
\end{equation*}
The following steps are adapted from~\citet[Proof of Theorem 1]{Jansen2012} for the multivariate case. Let $B_m$ denote the Bernstein copula, that is, the margins are known. Then by triangle inequality,
\begin{equation*}
	\sup_{\bm{u}\in [0, 1]^d}\left|C_{n, m}(\bm{u})-C(\bm{u})\right|\le \sup_{\bm{u}\in [0, 1]^d}\left|C_{n, m}(\bm{u})-B_m(\bm{u})\right|+\sup_{\bm{u}\in [0, 1]^d}\left|B_m(\bm{u})-C(\bm{u})\right|.
\end{equation*}
Further, let $\bm{k}=(k_1, \ldots, k_d)$,
\begin{align*}
	\sup_{\bm{u}\in [0, 1]^d}\left|C_{n, m}(\bm{u})-B_m(\bm{u})\right|&=\sup_{\bm{u}\in [0, 1]^d}\left|\sum_{k_1=0}^m\cdots \sum_{k_d=0}^m\left[C_n\left(\bm{k}/m\right)-C(\bm{k}/m)\right]\prod_{\ell=1}^dP_{k_{\ell}, m}(u_{\ell})\right|\\
	&\le \max_{0\le k_1,\ldots, k_d\le m}\left|C_n(\bm{k}/m)-C(\bm{k}/m)\right|\\
	&\le \sup_{\bm{u}\in [0, 1]^d}\left|C_n-C\right|=O\left(\sqrt{\frac{\log\log n}{n}}\right).
\end{align*}
And using Lipschitz property of copulas, 
\begin{align*}
	\sup_{\bm{u}\in [0, 1]^d}\left|B_m(\bm{u})-C(\bm{u})\right|&=\sup_{\bm{u}\in [0, 1]^d}\left|\sum_{k_1=0}^m\cdots \sum_{k_d=0}^m\left[C(\bm{k}/m)-C(\bm{u})\right]\prod_{\ell=1}^dP_{k_{\ell}, m}(u_{\ell})\right|\\
	&\le\sup_{\bm{u}\in [0, 1]^d}\left|\sum_{k_1=0}^m\cdots \sum_{k_d=0}^m|k_1/m-u_1| \prod_{\ell=1}^dP_{k_{\ell}, m}(u_{\ell})+\cdots\right.\\
	&\quad +\left. \sum_{k_1=0}^m\cdots \sum_{k_d=0}^m|k_d/m-u_d| \prod_{\ell=1}^dP_{k_{\ell}, m}(u_{\ell})\right|.
\end{align*}
Let $\{B_{{\ell}}: \ell= 1, \ldots, d\}$ denote binomial random variables with parameter $m$ and $u_{\ell}$. Then by the Cauchy-Schwarz inequality,
\begin{align*}
		\sup_{\bm{u}\in [0, 1]^d}\left|B_m(\bm{u})-C(\bm{u})\right|&\le \sum_{\ell=1}^d \sup_{u_{\ell}\in [0, 1]} \left| \sqrt{m^{-2}\Var \left(B_{{\ell}}\right)}\right|\\
		&=\sum_{\ell=1}^d \sup_{u_{\ell}\in [0, 1]} \sqrt{m^{-1}u_{\ell}(1-u_{\ell})}\\
		&=\frac{d}{2} \,m^{-1/2}.
\end{align*}
This completes the proof.
\end{proof}

\subsection{Proof of~\hyperref[thm:2022-06-22, 12:31PM]{Theorem~\ref{thm:2022-06-22, 12:31PM}}}

\begin{proof}

This proof is adapted from~\citet[Proof of Theorem 3.4]{Harder2017}. The weak convergence of $R_{\bm{n}}^{\bm{m}}, T_{\bm{n}}^{\bm{m}}$ follows directly from the continuous mapping theorem and \hyperref[lem:2022-06-22, 12:05PM]{Lemma~\ref{lem:2022-06-22, 12:05PM}}. For the convergence of $S_{\bm{n}}^{\bm{m}}$, one need to apply~\citet[Lemma A.1]{Harder2017}. Let $C[0, 1]^d$ denote the space  of continuous functions on $[0, 1]^d$, $D[0, 1]^d$ denote the space of functions with continuity from upper right quadrant and limits from other quadrants on $[0, 1]^d$, equipped with sup-norm. Further, denote $\rm{BV}_1[0, 1]^d$ by the subspace of $D[0, 1]^d$ where functions have total variation bounded by $1$. By the continuous mapping theorem, 
\begin{equation*}
	\left(\left\{\FF^{\bm{m}}_{\bm{n}}\right\}^2, {\CC}_{n_1, m_1}\right)\rightsquigarrow \left({\FF}^2, {\CC}\right)
\end{equation*}
in the space $\ell^{\infty}[0, 1]^d\times \ell^{\infty}[0, 1]^d$. Rewrite this as 
\begin{equation*}
	\left(\left\{\FF^{\bm{m}}_{\bm{n}}\right\}^2, {\CC}_{n_1, m_1}\right)=\sqrt{n_1}\{(A^{\bm{m}}_{\bm{n}}, {C}_{n_1, m_1})-(A, C)\},
\end{equation*}
where $A\equiv0$ and $A_{\bm{n}}^{\bm{m}}:=\frac{\sqrt{n_1}n_2}{n_1+n_2}\{C_{n_1, m_1}-D_{n_2, m_2}\}^2$. Then, consider the map $\phi: \ell^{\infty}[0, 1]^d\times \rm{BV}_1[0, 1]^d\to \RR$ defined by 
\begin{equation*}
	\phi(a, b)=\int_{(0, 1]^d}a \dif b.
\end{equation*}
Clearly, 
\begin{equation*}
	S_{\bm{n}}^{\bm{m}}=\phi		\left(\left\{\FF^{\bm{m}}_{\bm{n}}\right\}^2, {\CC}_{n_1, m_1}\right)=\sqrt{n_1}\{\phi(A_{\bm{n}}^{\bm{m}}, {C}_{n_1, m_1})-\phi(A, C)\}.
\end{equation*}
To conclude the proof, by~\citet[Lemma A.1]{Harder2017}, $\phi$ is Hadamard differentiable tangentially to $C[0, 1]^d\times D[0, 1]^d$ at each $(\alpha, \beta)$ in $\ell^{\infty}[0, 1]^d\times \rm{BV}_1[0, 1]^d$ such that $\int |d\alpha|<\infty$ with derivative 
\begin{equation*}
	\phi'_{(\alpha, \beta)}(a, b)=\int\alpha\, \dif b+\int a \, \dif \beta.
\end{equation*}
Then applying the Functional Delta Method, $S^{\bm{m}}_{\bm{n}}\rightsquigarrow\phi'_{(A, C)}\left({\FF}^2, {\CC}\right)$, where 
\begin{equation*}
	\phi'_{(A, \,C)}\left({\FF}^2, {\CC}\right)=\int_{(0, 1]^d}A \,\dif{\CC}+\int_{(0, 1]^d}{\FF}^2\dif C=\int_{(0, 1]^d}{\FF}^2\dif C.
\end{equation*}
This yields the desired result.
\end{proof}

\subsection{Proof of \hyperref[thm:2022-06-23, 3:40AM]{Theorem~\ref{thm:2022-06-23, 3:40AM}}}
 
\begin{proof}
This proof is inspired by~\citet[Proposition 4]{Genest2012}. The strongly uniform consistency of the $d$-variate empirical Bernstein copula is provided in \hyperref[lem:2022-06-23, 3:36AM]{Lemma~\ref{lem:2022-06-23, 3:36AM}}. Then by the continuous mapping theorem and independence between the two samples, it follows immediately that $R_{\bm{n}}^{\bm{m}}$ and $T_{\bm{n}}^{\bm{m}}$ converge to $R(C, D)$ and $T(C, D)$ almost surely, respectively. Further, to prove the convergence of $S_{\bm{n}}^{\bm{m}}$, write
\begin{equation*}
	|S_{\bm{n}}^{\bm{m}}-S(C, D)|\le n_2 \lambda_{\bm{n}} \{ |\gamma_{\bm{n}}^{\bm{m}}|+|\zeta_{n_1, m_1}|\},
\end{equation*}
where 
\begin{align*}
	\gamma_{\bm{n}}^{\bm{m}}&=\int_{[0, 1]^d}\left\{{C}_{n_1, m_1}(\bm{u})-{D}_{n_2, m_2}(\bm{u})\right\}^2\dif {C}_{n_1, m_1}(\bm{u})-\int_{[0, 1]^d}\left\{{C}(\bm{u})-{D}(\bm{u})\right\}^2\dif {C}_{n_1, m_1}(\bm{u}),
\end{align*}
and 
\begin{align*}
	\zeta_{n_1, m_1}&=\int_{[0, 1]^d}\left\{{C}(\bm{u})-{D}(\bm{u})\right\}^2\dif {C}_{n_1, m_1}(\bm{u})-\int_{[0, 1]^d}\left\{{C}(\bm{u})-{D}(\bm{u})\right\}^2\dif {C}(\bm{u}).
\end{align*}
Since 
\begin{align*}
	\left|\{{C}_{n_1, m_1}(\bm{u})-{D}_{n_2, m_2}(\bm{u})\}^2-\left\{{C}(\bm{u})-{D}(\bm{u})\right\}^2\right|
	&=\left|\left[{C}_{n_1, m_1}(\bm{u})+C(\bm{u})\right]-\left[{D}_{n_2, m_2}(\bm{u})+D(\bm{u})\right]\right|\\
	&\quad \times\left|\left[{C}_{n_1, m_1}(\bm{u})-C(\bm{u})\right]-\left[{D}_{n_2, m_2}(\bm{u})-D(\bm{u})\right]\right|\\
	&\le \left[\left|{C}_{n_1, m_1}(\bm{u})+C(\bm{u})\right|+\left|{D}_{n_2, m_2}(\bm{u})+D(\bm{u})\right|\right]\\
	&\quad \times\left[\left|{C}_{n_1, m_1}(\bm{u})-C(\bm{u})\right|+\left|{D}_{n_2, m_2}(\bm{u})-D(\bm{u})\right|\right]\\
	&\le  4\sup_{\bm{u}\in [0, 1]^d}\left|{C}_{n_1, m_1}(\bm{u})-C(\bm{u})\right|\\
	&\quad +4\sup_{\bm{u}\in [0, 1]^d}\left|{D}_{n_2, m_2}(\bm{u})-D(\bm{u})\right|,
\end{align*}
using \hyperref[lem:2022-06-23, 3:36AM]{Lemma~\ref{lem:2022-06-23, 3:36AM}}, 
\begin{equation*}
	|\gamma_{n_1 , n_2}^{\bm{m}}|\le 4\left\{\sup_{\bm{u}\in [0, 1]^d}\left|{C}_{n_1, m_1}(\bm{u})-C(\bm{u})\right|+\sup_{\bm{u}\in [0, 1]^d}\left|{D}_{n_2, m_2}(\bm{u})-D(\bm{u})\right|\right\}\xrightarrow[\min(n_1, n_2)\to \infty]{a.s.} 0.
\end{equation*}
As for $\zeta_{n_1, m_1}$, note that, $C_{n_1, m_1}: \Omega \times [0, 1]^d \to [0, 1]$ is a random mappings for fixed  $\omega\in \Omega$, and by \hyperref[lem:2022-06-23, 3:36AM]{Lemma~\ref{lem:2022-06-23, 3:36AM}}, $C_{n_1, m_1}$ converges pointwise to $C$ in an almost sure sense. Let $\widetilde{U}_{n_1, m_1}\sim C_{n_1, m_1}$  and $U\sim C$. Hence  
\begin{equation*}
	\widetilde{U}_{n_1, m_1}\xrightarrow[n_1\to \infty]{d}U.
\end{equation*}
In light of~\citet[Proposition A.1 (i)]{Genest1995} and~\citet[Proof of Lemma 3.1]{Harder2017}, let
\begin{equation*}
	\psi:[0, 1]^d \to \RR,\quad \psi(\bm{u}):=\{C(\bm{u})-D(\bm{u})\}^2.
\end{equation*}
Then by the continuous mapping theorem, 
\begin{equation*}
	V_{n_1, m_1}\coloneqq\psi\left(\widetilde{U}_{n_1, m_1}\right)\xrightarrow[n_1\to \infty]{d}\psi (U)\eqqcolon V.
\end{equation*}
If $V_{n_1, m_1}$ is asymptotically uniformly integrable, then applying~\citet[Theorem 1.11.3]{Van_der_vaart1996}, one has
\begin{equation*}
	\int_{[0, 1]^d}\left\{{C}(\bm{u})-{D}(\bm{u})\right\}^2\dif {C}_{n_1, m_1}(\bm{u})\xrightarrow[n_1\to \infty]{}\int_{[0, 1]^d}\left\{{C}(\bm{u})-{D}(\bm{u})\right\}^2\dif {C}(\bm{u}).
\end{equation*}
Then $\zeta_{n_1, m_1}\xrightarrow[n_1\to \infty]{}0$. To this end, it will be shown that there exists $\varepsilon >0$ such that $\EE\left[|V_{n_1, m_1}|^{1+\varepsilon}\right]$ is bounded. Indeed, since $\left\{{C}(\bm{u})-{D}(\bm{u})\right\}^2\le 1$,
\begin{equation*}
	\EE\left[|V_{n_1, m_1}|^{1+\varepsilon}\right]=\int_{[0, 1]^d} |\psi(\bm{u})|^{1+\varepsilon} \dif C_{n_1, m_1}(\bm{u)}\le 1.
\end{equation*}
Combining $\gamma_{n_1 , n_2}^{\bm{m}}$ and $\zeta_{n_1, m_1}$, $S^{\bm{m}}_{\bm{n}}$ converges to $S(C, D)$ almost surely.
\end{proof}

\subsection{Proof of \hyperref[lem:2023-09-16, 7:59AM]{Lemma~\ref{lem:2023-09-16, 7:59AM}}}\label{pf:lem}

\begin{proof}

	Under these assumptions, one need to use the framework in~\cite{Segers2017}. Specifically, let $\mu_{{m_1}, \bm{u}}$ be the law of random vector $(B_1/m_1, \ldots, B_d/m_1)$, where $\{B_\ell: \ell=1, \ldots, d\}$ follow $\textsf{Binomial}(m_1, u_\ell)$. The empirical Bernstein copula in Equation~\eqref{eq:2022-07-10, 11:01AM} can be rewritten as
	\begin{equation*}
		{C}_{n_1, m_1}(\bm{u})=\int_{[0, 1]^d}{C}_{n_1}(\bm{w})\,\d\mu_{m, \bm{u}}(\bm{w}), \qquad \bm{u}\in [0 ,1]^d.
	\end{equation*}
	Moreover, write $\bm{w}(t)=\bm{u}+t(\bm{w}-\bm{u})$ with $t\in [0, 1]$. Then, the empirical Bernstein copula process is
	\begin{align}\label{eq:2023-08-24, 6:52PM}
		\CC_{n_1, m_1}(\bm{u})&=\sqrt{n_1}\left\{	{C}_{n_1, m_1}(\bm{u})-C(\bm{u})\right\}\notag\\
		&=\sqrt{n_1}\left\{	{C}_{n_1, m_1}(\bm{u})-\int_{[0, 1]^d}C(\bm{w})\,\d\mu_{m_1, \bm{u}}(\bm{w})+\int_{[0, 1]^d}C(\bm{w})\,\d\mu_{m_1, \bm{u}}(\bm{w})-C(\bm{u})\right\}\notag\\
		&=\int_{[0, 1]^d}\sqrt{n_1}\left\{	{C}_{n_1}(\bm{w})-C(\bm{w})\right\}\,\dif \mu_{m_1, \bm{u}}(\bm{w})+\sqrt{n_1}\left\{\int_{[0, 1]^d}C(\bm{w})\,\d\mu_{m_1, \bm{u}}(\bm{w})-C(\bm{u})\right\}\notag\\
		&=T_1+T_2.
	\end{align}
	The two terms are dealt with separately.
	\begin{itemize}
		\item For the term $T_1$, according to~\citet[Proposition 3.1]{Segers2017},  one has 
		\begin{equation*}
			\sup\limits_{\bm{u}\in [0, 1]^d}\left|\int_{[0, 1]^d}\sqrt{n_1}\left\{{C}_{n_1}(\bm{w})-C(\bm{w})\right\}\,\d\mu_{m_1, \bm{u}}(\bm{w})-\sqrt{n_1}\left\{	{C}_{n_1}(\bm{u})-C(\bm{u})\right\}\right|=o_p(1).
		\end{equation*}
		And note that, $\sqrt{n_1}\left\{	{C}_{n_1}(\bm{u})-C(\bm{u})\right\}\rightsquigarrow\BB_C(\bm{u})$ in $\ell^{\infty}([0, 1]^d)$ under the~\hyperref[ass:2022-07-05, 1:23PM]{Assumption~\ref{ass:2022-07-05, 1:23PM}} (see~\cite{Segers2012}). Therefore, $T_1\rightsquigarrow \BB_C(\bm{u})$ as $n$ goes to infinity.
		
		\item For the term $T_2$, Let $m_1=cn_1^{\alpha}$ for some $c >0$, by~\citet[Lemma 3.1]{Kojadinovic2022stute}, under~\hyperref[ass:2022-07-05, 1:23PM]{ Assumption~\ref{ass:2022-07-05, 1:23PM}-\ref{ass:2} }, one has 
		\begin{align*}
			&\sup_{\bm{u}\in [0, 1]^d}\sqrt{n_1}\left|\int_{[0, 1]^d}C(\bm{w})\,\d\mu_{m_1, \bm{u}}(\bm{w})-C(\bm{u})\right|=O\left(n_1^{(3-4\alpha)/6}\right)
		\end{align*}
		almost surely. Therefore,  if $\alpha > 3/4$, $T_2$ goes to zero as $n$ goes to infinity.

	\end{itemize}
	Combining above results completes the proof.	
	\end{proof}

\subsection{Proof of \hyperref[thm:2022-06-23, 3:40AM]{Theorem~\ref{thm:2023-02-05, 8:49PM}}}
 
\begin{proof}
One can verify an intermediate result that as $\min(n_1, n_2)\to \infty$
\begin{equation}\label{eq:2023-02-09, 8:52AM}
	\sup_{\bm{u}\in [0, 1]^d}\left|\FF_{\bm{n}}^{\bm{m}}(\bm{u})-\FF_{\bm{n}}^*(\bm{u})\right|\overset{a.s.}{=}O\left(\Psi(n_1, n_2)\right).
\end{equation}
Indeed, let $\FF_{\bm{n}}(\bm{u}):=\sqrt{1-\lambda_{\bm{n}} }\CC_{n_1}-\sqrt{\lambda_{\bm{n}} }\DD_{n_2}$, in the light of~\citet[Proof of Theorem 2.1]{Bouzebda&ElFaouzi&Zari2011}. By~\cite{Borisov1982}, for a sequence of Brownian bridge $\{\BB_{n_1, C}\}_{n_1\in \NN^+}$, which are independent copies of $\BB_C$, one has as $n_1\to \infty$,
\begin{equation*}
	\sup_{\bm{u}\in [0, 1]^d}\left|\GG_{n_1}(\bm{u})-\BB_{n_1, C}(\bm{u})\right|\overset{a.s.}{=}O\left(n_1^{-1/(2(2d-1))} \log n_1\right),
\end{equation*}
where 
\begin{equation*}
	\GG_{n_1}(\bm{u})=\sqrt{n}\left\{G_{n_1}(\bm{u})-C(\bm{u})\right\}=\frac{1}{\sqrt{n_1}}\sum_{i=1}^{n_1}\left[\prod_{\ell=1}^d\II\left(U_{i\ell}\le u_{\ell}\right)-C(\bm{u})\right].
\end{equation*}
Then, under our assumptions, by~\cite{Segers2012}, as $n_1\to \infty$, 
\begin{equation*}
	\sup_{\bm{u}\in [0, 1]^d}\left|\CC_{n_1}(\bm{u})-\GG_{n_1}(\bm{u})+\sum_{\ell=1}^d\GG_{n_1}(\bm{u}^{\ell})\dot{C}_{\ell}(\bm{u})\right|\overset{a.s.}{=}O\left(n_1^{-1/4}(\log n_1)^{1/2}(\log \log n_1)^{1/4}\right).
\end{equation*}
Since $\dot{C}_{\ell}(\bm{u}), \ell=1, \ldots, d$ are bounded, 
\begin{align*}
\sup_{\bm{u}\in [0, 1]^d}\left|\CC_{n_1}(\bm{u})-\CC^{(n_1)}(\bm{u})\right|&\le \sup_{\bm{u}\in [0, 1]^d}\left|\CC_{n_1}(\bm{u})-\GG_{n_1}(\bm{u})+\sum_{\ell=1}^d\GG_{n_1}(\bm{u}^{\ell})\dot{C}_{\ell}(\bm{u})\right|\\
&\quad +\sup_{\bm{u}\in [0, 1]^d}\left|\GG_{n_1}(\bm{u})-\sum_{\ell=1}^d\GG_{n_1}(\bm{u}^{\ell})\dot{C}_{\ell}(\bm{u})-\CC^{(n_1)}(\bm{u})\right|\\
&=O\left(n_1^{-1/4}(\log n_1)^{1/2}(\log \log n_1)^{1/4}\right)+O\left(n_1^{-1/(2(2d-1))} \log n_1\right)\\
&=O\left(n_1^{-1/(2(2d-1))} \log n_1\right)
\end{align*} 
almost surely as $n_1\to \infty$. Similarly, one can obtain
\begin{equation*}
\sup_{\bm{u}\in [0, 1]^d}\left|\DD_{n_2}(\bm{u})-\DD^{(n_2)}(\bm{u})\right|=O\left(n_2^{-1/(2(2d-1))} \log n_2\right)
\end{equation*}
almost surely as $n_2\to \infty$. Therefore, one has almost surely
\begin{equation*}
	\sup_{\bm{u}\in [0, 1]^d}\left|\FF_{\bm{n}}^{}(\bm{u})-\FF_{\bm{n}}^*(\bm{u})\right|=O\left(\Psi(n_1, n_2)\right).
\end{equation*}
Therefore,
\begin{align}\label{eq:2023-02-09, 7:32AM}
	\sup_{\bm{u}\in [0, 1]^d}\left|\FF_{\bm{n}}^{\bm{m}}(\bm{u})-\FF_{\bm{n}}^*(\bm{u})\right|&\le	\sup_{\bm{u}\in [0, 1]^d}\left|\FF_{\bm{n}}^{\bm{m}}(\bm{u})-\FF_{\bm{n}}(\bm{u})\right|+	\sup_{\bm{u}\in [0, 1]^d}\left|\FF_{\bm{n}}^{}(\bm{u})-\FF_{\bm{n}}^*(\bm{u})\right| \notag \\
	&\le n_2 \lambda_{\bm{n}} \left[\sup_{\bm{u}\in [0, 1]^d}\left|C_{n_1, m_1}(\bm{u})-C_{n_1}(\bm{u})\right|+ \sup_{\bm{u}\in [0, 1]^d}\left|D_{n_2, m_2}(\bm{u})-D_{n_2}(\bm{u})\right|\right]\notag\\
	&\quad +O\left(\Psi(n_1, n_2)\right)\notag\\
	&= (1-\lambda_{\bm{n}}) \sup_{\bm{u}\in [0, 1]^d}\left|\CC_{n_1, m_1}(\bm{u})-\CC_{n_1}(\bm{u})\right|+ \lambda_{\bm{n}} \sup_{\bm{u}\in [0, 1]^d}\left|\DD_{n_2, m_2}(\bm{u})-\DD_{n_2}(\bm{u})\right|\notag\\
	&\quad +O\left(\Psi(n_1, n_2)\right).
\end{align}
Further, adapted from~\cite{Segers2017}, under our assumptions, as $\min(n_1, n_2)\to \infty$, 
\begin{align}\label{eq:2023-02-14, 10:51AM}
	\sup_{\bm{u}\in [0, 1]^d}\left|\CC_{n_1, m_1}(\bm{u})-\CC_{n_1}(\bm{u})\right|&=O\left(n_1^{-1/2}\right),\notag\\
	\sup_{\bm{u}\in [0, 1]^d}\left|\DD_{n_2, m_2}(\bm{u})-\DD_{n_2}(\bm{u})\right|&=O\left(n_2^{-1/2}\right).
\end{align}
One has 
\begin{align*}
	\eqref{eq:2023-02-09, 7:32AM}&=O\left(n_1^{-1/2}\right)+O\left(n_2^{-1/2}\right)+O\left(\Psi(n_1, n_2)\right)=O\left(\Psi(n_1, n_2)\right).
\end{align*}
Based on the intermediate result Equation~\eqref{eq:2023-02-09, 8:52AM}, one can show strong approximation of these statistics.
\begin{itemize}
\item  For $R_{\bm{n}}^{\bm{m}}$, 
\begin{align*}
	\left|R_{\bm{n}}^{\bm{m}}-\int_{[0, 1]^d}\{\FF^*_{\bm{n}}(\bm{u})\}^2\dif \bm{u}\right|&= \left|\int_{[0, 1]^d}\{\FF^{\bm{m}}_{\bm{n}}(\bm{u})\}^2-\{\FF^*_{\bm{n}}(\bm{u})\}^2\dif \bm{u}\right|\\
	&\le \left(\sup_{\bm{u}\in [0, 1]^d}\left|\FF_{\bm{n}}^{\bm{m}}(\bm{u})-\FF_{\bm{n}}^{*}(\bm{u})\right|\right)\left(\sup_{\bm{u}\in [0, 1]^d}\left|\FF_{\bm{n}}^{\bm{m}}(\bm{u})+\FF_{\bm{n}}^{*}(\bm{u})\right|\right)\\
	&\le \left(\sup_{\bm{u}\in [0, 1]^d}\left|\FF_{\bm{n}}^{\bm{m}}(\bm{u})-\FF_{\bm{n}}^{*}(\bm{u})\right|\right)\\
	&\quad \times \left(\sup_{\bm{u}\in [0, 1]^d}\left|\FF_{\bm{n}}^{\bm{m}}(\bm{u})-\FF_{\bm{n}}^{*}(\bm{u})\right|+2\sup_{\bm{u}\in [0, 1]^d}\left|\FF_{\bm{n}}^{\bm{m}}(\bm{u})\right|\right)\\
	&=O\left(\max\left((\log\log n_1)^{1/2}, (\log \log n_2)^{1/2}\right)\Psi(n_1, n_2)\right),
\end{align*}
where the last equality follows by~\citet[Theorem 1]{Jansen2012}.

\item For $S_{\bm{n}}^{\bm{m}}$, 
\begin{align*}
	\left|S_{\bm{n}}^{\bm{m}}-\int_{[0, 1]^d}\{\FF^*_{\bm{n}}(\bm{u})\}^2\dif C(\bm{u})\right|&\le 	\left|S_{\bm{n}}^{\bm{m}}-\int_{[0, 1]^d}\{\FF^*_{\bm{n}}(\bm{u})\}^2\dif C_{n_1, m_1}(\bm{u})\right|\\
	&\quad + 	\left|\int_{[0, 1]^d}\{\FF^*_{\bm{n}}(\bm{u})\}^2\dif C_{n_1, m_1}(\bm{u})-\int_{[0, 1]^d}\{\FF^*_{\bm{n}}(\bm{u})\}^2\dif C(\bm{u})\right|\\
	&=O\left(\max\left((\log\log n_1)^{1/2}, (\log \log n_2)^{1/2}\right)\Psi(n_1, n_2)\right)\\
	&\quad + 	\left|\int_{[0, 1]^d}\{\FF^*_{\bm{n}}(\bm{u})\}^2\dif\, \{ C_{n_1, m_1}(\bm{u})-C(\bm{u})\}\right|\\
	&\le O\left(\max\left((\log\log n_1)^{1/2}, (\log \log n_2)^{1/2}\right)\Psi(n_1, n_2)\right)\\
	&\quad + \sup_{\bm{u}\in [0, 1]^d}\left|\{\FF^*_{\bm{n}}(\bm{u})\}^2\right| \sup_{\bm{u}\in [0, 1]^d} \left|\{ C_{n_1, m_1}(\bm{u})-C(\bm{u})\}\right|\\
	&=O\left(\max\left((\log\log n_1)^{1/2}, (\log \log n_2)^{1/2}\right)\Psi(n_1, n_2)\right)\\
	&\quad + O(1) O\left(n_1^{-1/2}(\log \log n_1)^{1/2}\right)\\
	&=O\left(\max\left((\log\log n_1)^{1/2}, (\log \log n_2)^{1/2}\right)\Psi(n_1, n_2)\right),
\end{align*}
where the $O(1)$ comes form the fact that $\FF_{\bm{n}}^*$ is a centred Gaussian process.

\item For $T_{\bm{n}}^{\bm{m}}$, the result  immediately follows.
\end{itemize}
This completes the proof. 	
\end{proof} 

\subsection{Proof of \hyperref[prop:2023-02-10, 7:47AM]{Proposition~\ref{prop:2023-02-10, 7:47AM}}}
 \begin{proof}

We only prove the results for a sample $X$ with size $n_1$. According to~\cite{Remillard2009}, one has for $\bm{u}\in [0, 1]^d$ and $h\in \{ 1,\ldots, H\}$,
\begin{equation*}
	\GG_{n_1}(\bm{u})=\sqrt{n}\left\{G_{n_1}(\bm{u})-C(\bm{u})\right\}=\frac{1}{\sqrt{n_1}}\sum_{i=1}^{n_1}\left[\prod_{\ell=1}^d\II\left(U_{i\ell}\le u_{\ell}\right)-C(\bm{u})\right]\rightsquigarrow \BB_C(\bm{u}),
\end{equation*}
and 
\begin{equation*}
	\GG_{n_1}^{(h)}(\bm{u})=\frac{1}{\sqrt{n_1}}\sum_{i=1}^{n_1}\left(\xi_i^{(h)}-\overline{\xi}^{(h)}_{n_1}\right)\left[\prod_{\ell=1}^{d}\II\left(\widehat{U}_{i\ell}\le k_{\ell}/m_1\right)\right]\rightsquigarrow \BB_C(\bm{u}).	
\end{equation*}
To complete the proof, one needs to show that the difference between $\left(\GG_{n_1}, \GG^{(h)}_{n_1,}\right)$ and $\left(\GG_{n_1, m_1}, \GG_{n_1, m_1}^{(h)}\right)$ is asymptotically negligible.

By~\hyperref[lem:2023-09-16, 7:59AM]{Lemma~\ref{lem:2023-09-16, 7:59AM}}, under the assumptions,  
one has 
\begin{equation*}
\CC_{n_1,m_1}(\bm{u})=\sqrt{n_1}\{C_{n_1, m_1}(\bm{u})-C(\bm{u})\}\rightsquigarrow\CC(\bm{u})=\BB_C(\bm{u})-\sum_{\ell=1}^{d}\BB_C(\bm{u}^{\ell})\dot{C}_{\ell}(\bm{u}),
\end{equation*}
which implies that
\begin{equation*}
	\GG_{n_1, m_1}(\bm{u})\rightsquigarrow \BB_C(\bm{u}).
\end{equation*}

Furthermore, given that $\CC_{n_1,m_1}(\bm{u})$ and $\CC_{n_1}(\bm{u})$ converge to the same limit, one has that $\GG_{n_1, m_1}^{(h)}(\bm{u})$ and $\GG_{n_1}^{(h)}(\bm{u})$ will also converge to the same limit. This leads to the conclusion that $\GG_{n_1, m_1}^{(h)}(\bm{u})\rightsquigarrow \BB_C(\bm{u})$.
\end{proof}

\subsection{Proof of \hyperref[prop:2023-02-10, 7:51AM]{Proposition~\ref{prop:2023-02-10, 7:51AM}}} 
\begin{proof}

We only show the uniform consistency for 
\begin{align*}
\partial C_{n_1, m_1}(\bm{u})/\partial u_1&=m_1\sum_{k_1=0}^{m_1-1}\cdots \sum_{k_{d}=0}^{m_1}\left\{C_{n_1}\left(\frac{k_1+1}{m_1},\ldots, \frac{k_d}{m_1}\right)\right. -\left. C_{n_1}\left(\frac{k_1}{m_1},\ldots,  \frac{k_d}{m_1}\right)\right\}\\
&\quad \times P_{k_1, m_1-1}(u_1)\cdots P_{k_d, m_1}(u_d).
\end{align*}
The result for other partial derivatives can be obtained similarly. For any $\{\bm{u}\in [0, 1]^d: u_1 \in [b_{n_1}, 1-b_{n_1}]\}$, one has 
\begin{align*}
&	\left|\partial C_{n_1, m_1}(\bm{u})/\partial u_1-\dot{C}_1(\bm{u})\right|\\
&\quad \le \left|m_1\sum_{k_1=0}^{m_1-1}\cdots \sum_{k_{d}=0}^{m_1}\left\{C_{n_1}\left(\frac{k_1+1}{m_1},\ldots, \frac{k_d}{m_1}\right)\right. -\left. C_{n_1}\left(\frac{k_1}{m_1},\ldots,  \frac{k_d}{m_1}\right)-C_{}\left(\frac{k_1+1}{m_1},\ldots, \frac{k_d}{m_1}\right)\right.\right.\\
&\qquad \left.\left.+ C_{}\left(\frac{k_1}{m_1},\ldots,  \frac{k_d}{m_1}\right)\right\} P_{k_1, m_1-1}(u_1)\cdots P_{k_d, m_1}(u_d)\right|+\left|m_1\sum_{k_1=0}^{m_1-1}\cdots \sum_{k_{d}=0}^{m_1}\left\{C_{}\left(\frac{k_1+1}{m_1},\ldots, \frac{k_d}{m_1}\right)\right.\right.\\
&\qquad \left.\left.- C_{}\left(\frac{k_1}{m_1},\ldots,  \frac{k_d}{m_1}\right)\right\}P_{k_1, m_1-1}(u_1)\cdots P_{k_d, m_1}(u_d)-\dot{C}_1(\bm{u})\right|\\
&\quad =A_1+A_2.
\end{align*}
Further, let $P'_{k_1, m_1}(u_1)$ be the derivative of $P_{k_1, m_1}(u_1)$, then
\begin{align*}
	A_1&\le \sum_{k_1=0}^{m_1}\cdots \sum_{k_{d}=0}^{m_1}\left|C_{n_1}\left(\frac{k_1}{m_1},\ldots, \frac{k_d}{m_1}\right)\right. -\left. C_{}\left(\frac{k_1}{m_1},\ldots,  \frac{k_d}{m_1}\right)\right| \left|P'_{k_1, m_1}(u_1)\right|\cdots P_{k_d, m_1}(u_d)\\
	&\le \sup_{\bm{u}\in [0, 1]^d} \left|C_{n_1}(\bm{u})-C(\bm{u})\right|\cdot \sum_{k_1=0}^{m_1}\left|P'_{k_1, m_1}(u_1)\right|\\
	&=O\left(m_1^{1/2}n_1^{-1/2}(\log\log n_1)^{1/2}\right)
\end{align*}
almost surely as $n_1\to \infty$ and where $\sum_{k_1=0}^{m_1}\left|P'_{k_1, m_1}(u_1)\right|=O\left(m_1^{1/2}\right)$ by~\citet[Lemma 1]{Jansen2014}.

For dealing with $A_2$, let $\nu_{m_1, \bm{u}}$ be the law of random vector $(S_1/(m_1-1), B_2/m_1, \ldots, B_d/m_1)$, where $S_1$ follows \textsf{Binomial}$(m_1-1, u_1)$ and $\{B_\ell: \ell=2, \ldots, d\}$ follow $\textsf{Binomial}(m_1, u_\ell)$. One has 
\begin{align*}
&\sum_{k_1=0}^{m_1-1}\cdots \sum_{k_{d}=0}^{m_1}\left\{C_{}\left(\frac{k_1+1}{m_1},\ldots, \frac{k_d}{m_1}\right)- C_{}\left(\frac{k_1}{m_1},\ldots,  \frac{k_d}{m_1}\right)\right\}P_{k_1, m_1-1}(u_1)\cdots P_{k_d, m_1}(u_d)\\
&\quad =\int_{[0, 1]^d}C\left(\left(w_1+\frac{1}{m_1-1}\right)\frac{m_1-1}{m_1}, w_2, \dots, w_d\right)\dif \nu_{m_1, \bm{u}}(\bm{w})\\
&\qquad -\int_{[0, 1]^d}C\left(w_1\frac{m_1-1}{m_1}, w_2, \dots, w_d\right)\dif \nu_{m_1, \bm{u}}(\bm{w}).
\end{align*}
Using the representation in~\citet[Proof of Proposition 3.4]{Segers2017}, for $0<t<1$, one has 
\begin{align}\label{eq:2023-09-17, 12:06PM}
	A_2&=\left|m_1\int_{[0, 1]^d}\left[C\left(\left(w_1+\frac{1}{m_1-1}\right)\frac{m_1-1}{m_1}, w_2, \dots, w_d\right)-C\left(w_1\frac{m_1-1}{m_1}, w_2, \dots, w_d\right)\right]\dif \nu_{m_1, \bm{u}}(\bm{w})-\dot{C}_1(\bm{u})\right|\notag\\
	&=\left|\int_0^1\int_{[0, 1]^d}\left[\dot{C}_1\left(\left(\frac{m_1-1}{m_1}w_1+\frac{1+t}{m_1}\right), w_2, \dots, w_d\right)-\dot{C}_1(\bm{u})\right]\dif \nu_{m_1, \bm{u}}(\bm{w})\dif t\right|.
\end{align}
Let $w_1'(w_1, t):=\frac{m_1-1}{m_1}w_1+\frac{1+t}{m_1}$ and $\varepsilon_{n_1}=b_{n_1}/2$, then 
\begin{align*}
\eqref{eq:2023-09-17, 12:06PM} &\le \left|\int_0^1\int_{[0, 1]^d}\left[\dot{C}_1\left(w_1', w_2, \dots, w_d\right)-\dot{C}_1(\bm{u})\right]\II\left(\max(|w_1'-u_1|, |w_2-u_2|,  \dots, |w_d-u_d|)\le \varepsilon_{n_1}\right)\dif \nu_{m_1, \bm{u}}(\bm{w})\dif t\right|  \\
 & \quad +\left|\int_0^1\int_{[0, 1]^d}\left[\dot{C}_1\left(w_1', w_2, \dots, w_d\right)-\dot{C}_1(\bm{u})\right]\II\left(\max(|w_1'-u_1|, |w_2-u_2|,  \dots, |w_d-u_d|)> \varepsilon_{n_1}\right)\dif \nu_{m_1, \bm{u}}(\bm{w})\dif t\right|\\
 &=A_{21}+A_{22}.
\end{align*}
Further, the two terms are dealt with separately using the strategy in~\citet[Proof of Lemma 3.1]{Kojadinovic2022stute},
\begin{itemize}
	\item For term $A_{21}$, under \hyperref[ass:2022-07-05, 1:23PM]{Assumption~\ref{ass:2022-07-05, 1:23PM}-\ref{ass:2}}, by~\citet[Lemma 4.3]{Segers2012}, for a constant $L>0$, one has
	\begin{align*}
		&\left|\dot{C}_1\left(w_1', w_2, \ldots, w_d\right)-\dot{C}(\bm{u})\right|\II\left(\max(|w_1'-u_1|, |w_2-u_2|,  \dots, |w_d-u_d|)\le \varepsilon_{n_1}\right)\\
		&\quad \le Lb_{n_1}^{-1}\left[|w_1'-u_1|+|w_2-u_2|+\cdots+|w_d-u_d|\right].
	\end{align*}
	Further, 
	\begin{align}\label{eq:2023-09-17, 7:49PM}
		A_{21} & \le  L b_{n_1}^{-1}\int_{[0, 1]^d} \int_0^1 \left[|w_1-u_1|+|w_2-u_2|+\cdots+|w_d-u_d|+|w_1'-w_1|\right] \dif t \dif \nu_{m_1, \bm{u}}(\bm{w})\notag\\
		&\le L b_{n_1}^{-1}\int_{[0, 1]^d} \int_0^1 \left[|w_1-u_1|+|w_2-u_2|+\cdots+|w_d-u_d|+\left| \frac{1+t}{m_1}\right|+\left|\frac{w_1}{m_1}\right|\right] \dif t \dif \nu_{m_1, \bm{u}}(\bm{w})\notag\\
& =L b_{n_1}^{-1}\int_{[0, 1]^d}  \left[|w_1-u_1|+|w_2-u_2|+\cdots+|w_d-u_d|+ \frac{3}{2m_1}+\frac{w_1}{m_1}\right]  \dif \nu_{m_1, \bm{u}}(\bm{w})
	\end{align}
By Cauchy-Schwarz inequality,
\begin{align*}
	\eqref{eq:2023-09-17, 7:49PM} & \le Lb_{n_1}^{-1}\left[O\left(m_1^{-1/2}\right)+O\left(m_1^{-1}\right)\right] \\
	&=O\left(b_{n_1}^{-1}m_1^{-1/2}\right).
\end{align*}	
	
	\item For term $A_{22}$, since $\dot{C}_1\in [0, 1]$ and using the result in the proof of $A_{21}$, 
\end{itemize}
\begin{align}\label{eq:2023-09-17, 7:52PM}
	A_{22}&\left|\int_0^1\int_{[0, 1]^d}\frac{2}{\varepsilon_{n_1}}\max(|w_1'-u_1|, |w_2-u_2|, \ldots,|w_d-u_d|) \dif \nu_{m_1, \bm{u}}(\bm{w}) \dif t\right|\\
	&\le \left|\int_0^1\int_{[0, 1]^d}\frac{2}{\varepsilon_{n_1}}\max(  |w_1'-u_1|+|w_2-u_2|+\cdots+|w_d-u_d|) \dif \nu_{m_1, \bm{u}}(\bm{w}) \dif t\right|\\
	&\le  \varepsilon_{n_1}^{-1}\left[O\left(m_1^{-1/2}\right)+O(m_1^{-1})\right]\\
	&=O\left(b_{n_1}^{-1}m_1^{-1/2}\right).
	\end{align}
	Therefore, 
	\begin{equation*}
		\sup_{\{\bm{u}\in [0, 1]^d: u_1\in [b_{n_1}, 1-b_{n_1}]\}}\left|\frac{\partial C_{n_1, m_1}(\bm{u}) }{\partial u_1}-\dot{C}_1(\bm{u})\right|=O\left(m_1^{1/2}n_1^{-1/2}(\log \log n_1)^{1/2}\right)+O\left(b_{n_1}^{-1}m_1^{-1/2}\right)
	\end{equation*}
almost surely as $n_1\to \infty$, which completes the proof.
\end{proof}

\subsection{Proof of \hyperref[thm:2022-07-10, 10:27PM]{Theorem~\ref{thm:2022-07-10, 10:27PM}}} 
\begin{proof}

We only show the result for $\CC$. The $\DD$ process can be treated similarly. The empirical copula for subsample $h$ is defined as 
\begin{equation*}
	C^{(I_h)}_{b_1}(\bm{u})=\frac{1}{b_1}\sum_{i=1}^{b_1}\prod_{\ell=1}^{d}\II\left(\widehat{U}_{i\ell}\le u_{\ell}\right),\qquad h\in \{1, \ldots, H\}.
\end{equation*}
The corresponding corrected subsample empirical copula process is
\begin{equation*}
	\CC_{b_1}^{(I_h)}(\bm{u})=\sqrt{\frac{b_1}{1-b_1/n_1}}\{C^{(I_h)}_{b_1}(\bm{u})-C_{n_1}(\bm{u})\}.
\end{equation*}
By~\citet[Theorem 1]{Kojadinovic2019}, under our assumptions,
\begin{equation*}
	\left(\CC_{n_1}, \CC_{b_1}^{(I_1)},\ldots, \CC_{b_1}^{(I_H)}\right)\rightsquigarrow \left(\CC, \CC^{(1)}, \ldots, \CC^{(H)}\right).
\end{equation*}
Then one can show that $\CC_{b_1, m_{(1)}}^{(I_h)}$ and $\CC_{b_1}^{(I_h)}$ are asymptotically equivalent. Indeed, applying~\citet[Theorem 3.6]{Segers2017}, as $n_1\to \infty$,
\begin{align*}
	\sqrt{\frac{b_1}{1-b_1/n_1}}\{C^{(I_h)}_{b_1, m_{(1)}}(\bm{u})-C(\bm{u})\}&=\sqrt{\frac{b_1}{1-b_1/n_1}}\{C^{(I_h)}_{b_1}(\bm{u})-C_{}(\bm{u})\}+o_p(1),\\
	\sqrt{\frac{b_1}{1-b_1/n_1}}\{C^{}_{n_1, m_{1}}(\bm{u})-C(\bm{u})\}&=\sqrt{\frac{b_1}{1-b_1/n_1}}\{C^{}_{n_1}(\bm{u})-C_{}(\bm{u})\}+o_p(1).
\end{align*}
Therefore,
\begin{equation*}
	\CC_{b_1, m_{(1)}}^{(I_h)}=\CC_{b_1}^{(I_h)}+o_p(1), \quad n_1\to \infty,
\end{equation*}
which completes the proof.
\end{proof}

\bibliographystyle{chicago}
\bibliography{Mybib}	

\begin{thebibliography}{}

\bibitem[\protect\citeauthoryear{Bahraoui, Bouezmarni, and Quessy}{Bahraoui
  et~al.}{2018}]{Bahraoui2018}
Bahraoui, T., T.~Bouezmarni, and J.-F. Quessy (2018).
\newblock Testing the symmetry of a dependence structure with a characteristic
  function.
\newblock {\em Dependence Modeling\/}~{\em 6\/}(1), 331--355.

\bibitem[\protect\citeauthoryear{Bakam and Pommeret}{Bakam and
  Pommeret}{2022}]{Bakam2021}
Bakam, Y. I.~N. and D.~Pommeret (2022).
\newblock K-sample test for equality of copulas.
\newblock arXiv preprint arXiv:2112.05623.

\bibitem[\protect\citeauthoryear{Beare and Seo}{Beare and
  Seo}{2020}]{Beare2020}
Beare, B.~K. and J.~Seo (2020).
\newblock Randomization tests of copula symmetry.
\newblock {\em Econometric Theory\/}~{\em 36\/}(6), 1025--1063.

\bibitem[\protect\citeauthoryear{Belalia}{Belalia}{2016}]{Belalia2016}
Belalia, M. (2016).
\newblock On the asymptotic properties of the {B}ernstein estimator of the
  multivariate distribution function.
\newblock {\em Statistics and Probability Letters\/}~{\em 110}, 249--256.

\bibitem[\protect\citeauthoryear{Belalia, Bouezmarni, Lemyre, and
  Taamouti}{Belalia et~al.}{2017}]{Belalia2017b}
Belalia, M., T.~Bouezmarni, F.~C. Lemyre, and A.~Taamouti (2017).
\newblock Testing independence based on {B}ernstein empirical copula and copula
  density.
\newblock {\em Journal of Nonparametric Statistics\/}~{\em 29\/}(2), 346--380.

\bibitem[\protect\citeauthoryear{Borisov}{Borisov}{1982}]{Borisov1982}
Borisov, I. (1982).
\newblock Approximation of empirical fields that are constructed from vector
  observations with dependent coordinates.
\newblock {\em Sibirskii Matematicheskii Zhurnal\/}~{\em 23\/}(5), 31--41.

\bibitem[\protect\citeauthoryear{Bouzebda and El~Faouzi}{Bouzebda and
  El~Faouzi}{2012}]{Bouzebda&ElFaouzi2012}
Bouzebda, S. and N.-E. El~Faouzi (2012).
\newblock New two-sample tests based on the integrated empirical copula
  processes.
\newblock {\em Statistics\/}~{\em 46\/}(3), 313--324.

\bibitem[\protect\citeauthoryear{Bouzebda, El~Faouzi, and Zari}{Bouzebda
  et~al.}{2011}]{Bouzebda&ElFaouzi&Zari2011}
Bouzebda, S., N.-E. El~Faouzi, and T.~Zari (2011).
\newblock On the multivariate two-sample problem using strong approximations of
  empirical copula processes.
\newblock {\em Communications in Statistics—Theory and Methods\/}~{\em
  40\/}(8), 1490--1509.

\bibitem[\protect\citeauthoryear{Bouzebda, Keziou, and Zari}{Bouzebda
  et~al.}{2011}]{Bouzebda&Keziou&Zari2011}
Bouzebda, S., A.~Keziou, and T.~Zari (2011).
\newblock K-sample problem using strong approximations of empirical copula
  processes.
\newblock {\em Mathematical Methods of Statistics\/}~{\em 20}, 14--29.

\bibitem[\protect\citeauthoryear{Charpentier, Fermanian, and
  Scaillet}{Charpentier et~al.}{2007}]{Charpentier2007}
Charpentier, A., J.-D. Fermanian, and O.~Scaillet (2007).
\newblock The estimation of copulas : theory and practice.
\newblock In J.~Rank (Ed.), {\em Copulas: from theory to application in
  finance}, Chapter~2, pp.\  35--64. London: Risk Books.

\bibitem[\protect\citeauthoryear{Chen and Huang}{Chen and
  Huang}{2007}]{Chen2007}
Chen, S.~X. and T.-M. Huang (2007).
\newblock Nonparametric estimation of copula functions for dependence
  modelling.
\newblock {\em Canadian Journal of Statistics\/}~{\em 35\/}(2), 265--282.

\bibitem[\protect\citeauthoryear{Cousido-Rocha, de~U{\~n}a-{\'A}lvarez, and
  Hart}{Cousido-Rocha et~al.}{2019}]{Cousido2019}
Cousido-Rocha, M., J.~de~U{\~n}a-{\'A}lvarez, and J.~D. Hart (2019).
\newblock Testing equality of a large number of densities under mixing
  conditions.
\newblock {\em Test\/}~{\em 28}, 1203--1228.

\bibitem[\protect\citeauthoryear{Deheuvels}{Deheuvels}{1981}]{Deheuvels1981}
Deheuvels, P. (1981).
\newblock An asymptotic decomposition for multivariate distribution-free tests
  of independence.
\newblock {\em Journal of Multivariate Analysis\/}~{\em 11\/}(1), 102--113.

\bibitem[\protect\citeauthoryear{Derumigny, Fermanian, and Min}{Derumigny
  et~al.}{2022}]{Derumigny2022}
Derumigny, A., J.-D. Fermanian, and A.~Min (2022).
\newblock Testing for equality between conditional copulas given discretized
  conditioning events.
\newblock {\em Canadian Journal of Statistics\/}.
\newblock In press.

\bibitem[\protect\citeauthoryear{Dupuis, Jacquier, Papageorgiou, and
  R{\'e}millard}{Dupuis et~al.}{2009}]{Dupuis2009empirical}
Dupuis, D., E.~Jacquier, N.~Papageorgiou, and B.~R{\'e}millard (2009).
\newblock Empirical evidence on the dependence of credit default swaps and
  equity prices.
\newblock {\em Journal of Futures Markets: Futures, Options, and Other
  Derivative Products\/}~{\em 29\/}(8), 695--712.

\bibitem[\protect\citeauthoryear{Fermanian, Radulovi\'{c}, and
  Wegkamp}{Fermanian et~al.}{2004}]{Fermanian2004}
Fermanian, J.-D., D.~Radulovi\'{c}, and M.~Wegkamp (2004).
\newblock Weak convergence of empirical copula processes.
\newblock {\em Bernoulli\/}~{\em 10\/}(5), 847--860.

\bibitem[\protect\citeauthoryear{Genest, Ghoudi, and Rivest}{Genest
  et~al.}{1995}]{Genest1995}
Genest, C., K.~Ghoudi, and L.-P. Rivest (1995).
\newblock A semiparametric estimation procedure of dependence parameters in
  multivariate families of distributions.
\newblock {\em Biometrika\/}~{\em 82\/}(3), 543--552.

\bibitem[\protect\citeauthoryear{Genest, Ne{\v{s}}lehov{\'a}, and
  Quessy}{Genest et~al.}{2012}]{Genest2012}
Genest, C., J.~Ne{\v{s}}lehov{\'a}, and J.-F. Quessy (2012).
\newblock Tests of symmetry for bivariate copulas.
\newblock {\em Annals of the Institute of Statistical Mathematics\/}~{\em
  64\/}(4), 811--834.

\bibitem[\protect\citeauthoryear{Hall and Neumeyer}{Hall and
  Neumeyer}{2006}]{Hall2006}
Hall, P. and N.~Neumeyer (2006).
\newblock Estimating a bivariate density when there are extra data on one or
  both components.
\newblock {\em Biometrika\/}~{\em 93\/}(2), 439--450.

\bibitem[\protect\citeauthoryear{Harder and Stadtm{\"u}ller}{Harder and
  Stadtm{\"u}ller}{2017}]{Harder2017}
Harder, M. and U.~Stadtm{\"u}ller (2017).
\newblock Testing exchangeability of copulas in arbitrary dimension.
\newblock {\em Journal of Nonparametric Statistics\/}~{\em 29\/}(1), 40--60.

\bibitem[\protect\citeauthoryear{Janssen, Swanepoel, and Veraverbeke}{Janssen
  et~al.}{2012}]{Jansen2012}
Janssen, P., J.~Swanepoel, and N.~Veraverbeke (2012).
\newblock Large sample behavior of the {B}ernstein copula estimator.
\newblock {\em Journal of Statistical Planning and Inference\/}~{\em 142\/}(5),
  1189--1197.

\bibitem[\protect\citeauthoryear{Janssen, Swanepoel, and Veraverbeke}{Janssen
  et~al.}{2014}]{Jansen2014}
Janssen, P., J.~Swanepoel, and N.~Veraverbeke (2014).
\newblock A note on the asymptotic behavior of the {B}ernstein estimator of the
  copula density.
\newblock {\em Journal of Multivariate Analysis\/}~{\em 124}, 480--487.

\bibitem[\protect\citeauthoryear{Janssen, Swanepoel, and Veraverbeke}{Janssen
  et~al.}{2016}]{Janssen2016}
Janssen, P., J.~Swanepoel, and N.~Veraverbeke (2016).
\newblock Bernstein estimation for a copula derivative with application to
  conditional distribution and regression functionals.
\newblock {\em Test\/}~{\em 25\/}(2), 351--374.

\bibitem[\protect\citeauthoryear{Jim{\'e}nez-Gamero, Cousido-Rocha,
  Alba-Fern{\'a}ndez, Jim{\'e}nez-Jim{\'e}nez, et~al.}{Jim{\'e}nez-Gamero
  et~al.}{2022}]{Jimenez2022}
Jim{\'e}nez-Gamero, M., M.~Cousido-Rocha, M.~Alba-Fern{\'a}ndez,
  F.~Jim{\'e}nez-Jim{\'e}nez, et~al. (2022).
\newblock Testing the equality of a large number of populations.
\newblock {\em Test\/}~{\em 31}, 1--21.

\bibitem[\protect\citeauthoryear{Kiriliouk, Segers, and Tsukahara}{Kiriliouk
  et~al.}{2021}]{Kiriliouk2021}
Kiriliouk, A., J.~Segers, and H.~Tsukahara (2021).
\newblock Resampling procedures with empirical beta copulas.
\newblock In {\em Pioneering Works on Extreme Value Theory: In Honor of Masaaki
  Sibuya}, pp.\  27--53. Springer.

\bibitem[\protect\citeauthoryear{Kojadinovic}{Kojadinovic}{2022}]{Kojadinovic2022stute}
Kojadinovic, I. (2022).
\newblock On {S}tute's representation for a class of smooth, possibly
  data-adaptive empirical copula processes.
\newblock arXiv preprint arXiv:2204.11240.

\bibitem[\protect\citeauthoryear{Kojadinovic and Stemikovskaya}{Kojadinovic and
  Stemikovskaya}{2019}]{Kojadinovic2019}
Kojadinovic, I. and K.~Stemikovskaya (2019).
\newblock Subsampling (weighted smooth) empirical copula processes.
\newblock {\em Journal of Multivariate Analysis\/}~{\em 173}, 704--723.

\bibitem[\protect\citeauthoryear{Kojadinovic and Yan}{Kojadinovic and
  Yan}{2011}]{Kojadinovic2011}
Kojadinovic, I. and J.~Yan (2011).
\newblock A goodness-of-fit test for multivariate multiparameter copulas based
  on multiplier central limit theorems.
\newblock {\em Statistics and Computing\/}~{\em 21}, 17--30.

\bibitem[\protect\citeauthoryear{Leblanc}{Leblanc}{2012}]{Leblanc2012b}
Leblanc, A. (2012).
\newblock On the boundary properties of {B}ernstein polynomial estimators of
  density and distribution functions.
\newblock {\em Journal of Statistical Planning and Inference\/}~{\em
  142\/}(10), 2762--2778.

\bibitem[\protect\citeauthoryear{Lyu and Belalia}{Lyu and
  Belalia}{2023}]{Lyu2022}
Lyu, G. and M.~Belalia (2023).
\newblock Testing symmetry for bivariate copulas using {B}ernstein polynomials.
\newblock arXiv preprint arXiv:2202.12787.

\bibitem[\protect\citeauthoryear{Morettin, Toloi, Chiann, and
  de~Miranda}{Morettin et~al.}{2010}]{Morettin2010}
Morettin, P.~A., C.~M.~C. Toloi, C.~Chiann, and J.~C.~S. de~Miranda (2010).
\newblock Wavelet-smoothed empirical copula estimators.
\newblock {\em Revista Brasileira de Finan{\c{c}}as\/}~{\em 8\/}(3), 263--281.

\bibitem[\protect\citeauthoryear{Neumann, Bodnar, Pfeifer, and
  Dickhaus}{Neumann et~al.}{2019}]{Neumann2019}
Neumann, A., T.~Bodnar, D.~Pfeifer, and T.~Dickhaus (2019).
\newblock Multivariate multiple test procedures based on nonparametric copula
  estimation.
\newblock {\em Biometrical Journal\/}~{\em 61\/}(1), 40--61.

\bibitem[\protect\citeauthoryear{Omelka, Gijbels, and Veraverbeke}{Omelka
  et~al.}{2009}]{Omelka2009}
Omelka, M., I.~Gijbels, and N.~Veraverbeke (2009).
\newblock {Improved kernel estimation of copulas: Weak convergence and
  goodness-of-fit testing}.
\newblock {\em The Annals of Statistics\/}~{\em 37\/}(5B), 3023 -- 3058.

\bibitem[\protect\citeauthoryear{R{\'e}millard and Scaillet}{R{\'e}millard and
  Scaillet}{2009}]{Remillard2009}
R{\'e}millard, B. and O.~Scaillet (2009).
\newblock Testing for equality between two copulas.
\newblock {\em Journal of Multivariate Analysis\/}~{\em 100\/}(3), 377--386.

\bibitem[\protect\citeauthoryear{R\"{u}schendorf}{R\"{u}schendorf}{1976}]{Ruschendorf1976}
R\"{u}schendorf, L. (1976).
\newblock Asymptotic distributions of multivariate rank order statistics.
\newblock {\em The Annals of Statistics\/}~{\em 4\/}(5), 912--923.

\bibitem[\protect\citeauthoryear{Sancetta and Satchell}{Sancetta and
  Satchell}{2004}]{Sancetta2004}
Sancetta, A. and S.~Satchell (2004).
\newblock The {B}ernstein copula and its applications to modeling and
  approximations of multivariate distributions.
\newblock {\em Econometric Theory\/}~{\em 20\/}(3), 535--562.

\bibitem[\protect\citeauthoryear{Scaillet}{Scaillet}{2005}]{Scaillet2005a}
Scaillet, O. (2005).
\newblock A {K}olmogorov-{S}mirnov type test for positive quadrant dependence.
\newblock {\em Canadian Journal of Statistics\/}~{\em 33\/}(3), 415--427.

\bibitem[\protect\citeauthoryear{Segers}{Segers}{2012}]{Segers2012}
Segers, J. (2012).
\newblock Asymptotics of empirical copula processes under non-restrictive
  smoothness assumptions.
\newblock {\em Bernoulli\/}~{\em 18\/}(3), 764--782.

\bibitem[\protect\citeauthoryear{Segers, Sibuya, and Tsukahara}{Segers
  et~al.}{2017}]{Segers2017}
Segers, J., M.~Sibuya, and H.~Tsukahara (2017).
\newblock The empirical beta copula.
\newblock {\em Journal of Multivariate Analysis\/}~{\em 155}, 35--51.

\bibitem[\protect\citeauthoryear{Seo}{Seo}{2021}]{Seo2021}
Seo, J. (2021).
\newblock Randomization tests for equality in dependence structure.
\newblock {\em Journal of Business \& Economic Statistics\/}~{\em 39\/}(4),
  1026--1037.

\bibitem[\protect\citeauthoryear{Sklar}{Sklar}{1959}]{Sklar1959}
Sklar, A. (1959).
\newblock Fonctions de r\'epartition \`a n dimensions et leurs marges.
\newblock {\em Publications de l'Institut de Statistique de l'Universit\'e de
  Paris\/}~{\em 8}, 229--231.

\bibitem[\protect\citeauthoryear{Szolgay, Ga{\'a}l, Bacig{\'a}l, Kohnov{\'a},
  Hlav{\v{c}}ov{\'a}, V{\`y}leta, Parajka, and Bl{\"o}schl}{Szolgay
  et~al.}{2016}]{Szolgay2016regional}
Szolgay, J., L.~Ga{\'a}l, T.~Bacig{\'a}l, S.~Kohnov{\'a},
  K.~Hlav{\v{c}}ov{\'a}, R.~V{\`y}leta, J.~Parajka, and G.~Bl{\"o}schl (2016).
\newblock A regional comparative analysis of empirical and theoretical flood
  peak-volume relationships.
\newblock {\em J. Hydrol. Hydromech\/}~{\em 64\/}(4), 367--381.

\bibitem[\protect\citeauthoryear{van~der Vaart and Wellner}{van~der Vaart and
  Wellner}{1996}]{Van_der_vaart1996}
van~der Vaart, A.~W. and J.~A. Wellner (1996).
\newblock {\em Weak Convergence and Empirical Processes: With Applications to
  Statistics}.
\newblock New York, NY, USA: Springer.

\bibitem[\protect\citeauthoryear{Zhan and Hart}{Zhan and Hart}{2014}]{Zhan2014}
Zhan, D. and J.~Hart (2014).
\newblock Testing equality of a large number of densities.
\newblock {\em Biometrika\/}~{\em 101\/}(2), 449--464.

\end{thebibliography}
\end{document}